\newtheorem{definition}{Definition}[section]
\newtheorem{theorem}[definition]{Theorem}
\newtheorem{lemma}[definition]{Lemma}
\newtheorem{claim}{Claim}[lemma]
\newtheorem{claim*}{Claim}[lemma]
\newtheorem{claim**}{Claim}[lemma]
\numberwithin{equation}{section}
\newcommand{\nin}{\notin}
\newcommand{\ignore}[1]{}
\title{ On Ideal Secret-Sharing Schemes for $k$-homogeneous access structures }
\author{Younjin Kim
\thanks{Extremal Combinatorics and Probability Group (ECOPRO), Institute for Basic Science (IBS), Daejeon, South Korea. Email: {\tt younjinkim@ibs.re.kr}.
Y.K. was supported by the Institute for Basic Science (IBS-R029-C4).}\and Jihye Kwon
 \thanks{Department of Mathematics, Ewha Womans University, Seoul, South Korea. Email: {\tt jhkwon74@ewhain.net}. J.K. was supported by the National Research Foundation of Korea(NRF) grant funded by the Ministry of Education (No. 2019R1A6A1A11051177)}.\and
 Hyang-Sook Lee 
 \thanks{Department of Mathematics, Ewha Womans University, Seoul, South Korea. Email: {\tt hsl@ewha.ac.kr}. H.L. was supported by the National Research Foundation of Korea(NRF) grant funded by the Ministry of Education (No. 2019R1A6A1A11051177) and partially by the Korea Government(MSIT) (No. NRF-2021R1A2C1094821).}
 }
\begin{document}

\maketitle

\begin{abstract}

A $k$-uniform hypergraph is a hypergraph where each $k$-hyperedge has exactly $k$ vertices. A $k$-homogeneous access structure is represented by a $k$-uniform hypergraph $\mathcal{H}$, in which the participants correspond to the vertices of hypergraph $\mathcal{H}$. A set of vertices can reconstruct the secret value from their shares if they are connected by a $k$-hyperedge, while a set of non-adjacent vertices does not obtain any information about the secret. One parameter for measuring the efficiency of a secret sharing scheme is the information rate, defined as the ratio between the length of the secret and the maximum length of the shares given to the participants. Secret sharing schemes with an information rate equal to one are called ideal secret sharing schemes. An access structure is considered ideal if an ideal secret sharing scheme can realize it. Characterizing ideal access structures is one of the important problems in secret sharing schemes. The characterization of ideal access structures has been studied by many authors~\cite{BD, CT,JZB, FP1,FP2,DS1,TD}.  In this paper, we characterize ideal $k$-homogeneous access structures using the independent sequence method. In particular, we prove that the reduced access structure of $\Gamma$ is an $(k, n)$-threshold access structure when the optimal information rate of $\Gamma$ is larger than $\frac{k-1}{k}$, where $\Gamma$ is a $k$-homogeneous access structure satisfying specific criteria.
  \end{abstract}

\section{Introduction}

A secret sharing scheme is a tool utilized in numerous cryptographic protocols. It involves a dealer who possesses a secret, a set of $n$ participants, and a collection $\mathcal{F}$ of subsets of participants defined as the access structure. A secret sharing scheme for $\mathcal{F}$ is a method in which the dealer distributes shares of a secret value $k$ among the $n$ participants so that any subset within $\mathcal{F}$ can reconstruct the secret value $k$ from their shares, while any subset not in $\mathcal{F}$ cannot reveal any information about the secret value $k$. The {\it qualified subsets} in the secret sharing scheme are defined as the subsets of participants capable of reconstructing the secret value $k$ from their shares. A collection of qualified subsets of participants is referred to as the {\it access structure} of the secret sharing scheme. In contrast, the {\it unqualified subsets} or {\it forbidden subsets} in the secret sharing scheme are defined as the subsets of participants who cannot obtain any information about the secret value $k$ from their shares.\\

In 1979, Shamir~\cite{AS} introduced a $(t,n)$-threshold secret sharing scheme as the pioneering work in secret sharing. In this scheme, the qualified subsets consist of all subsets with at least $t$ participants from a set of $n$ participants, and the size of each share is equal to the size of the secret. This implies that the $(t,n)$-threshold secret sharing scheme is determined by the basis consisting of all subsets with exactly $t$ distinct participants from a set of $n$ participants.\\

A hypergraph is a generalization of a graph in which hyperedges may connect more than two vertices. A $k$-uniform hypergraph is one in which each hyperedge has exactly $k$ vertices.
An access structure is a $k$-uniform hypergraph access structure, denoted by $\mathcal{H}$, if the set of vertices connected by a $k$-hyperedge can reconstruct the secret, and the set of non-adjacent vertices in $\mathcal{H}$ does not reveal any information about the secret. Access structures of this type are also called $k$-homogeneous.
A $k$-homogeneous access structure is determined by a family of minimal qualified subsets, each consisting of exactly $k$ different participants, or $k$-uniform hypergraphs $\mathcal{H}(V,E)$, where $V$ is a vertex set and $E \subseteq 2^V$ is the edge set of hyperedges of cardinality $k$.
Several authors have constructed secret sharing schemes for $k$-homogeneous access structures using various techniques.\\

One way to measure the efficiency of a secret sharing scheme is to use the information rate, defined as the ratio between the length of the secret and the maximum length of the shares given to the participants. Since the length of any share is greater than or equal to the length of the secret in a secret sharing scheme, the information rate cannot exceed one. Secret sharing schemes with an information rate equal to one are called {\it ideal secret sharing schemes}. For example, Shamir's threshold secret sharing scheme is ideal.\\

 An access structure is ideal if there exists an ideal secret sharing scheme to implement it. The characterization of ideal access structures is an important issue in secret sharing schemes, which has been studied by numerous authors~\cite{BD,CT,JZB,FP1,FP2,MP3,DS1,TD}.
 In 1992, Stinson~\cite{DS1} exactly characterized all ideal $2$-homogeneous access structures. In 2007, Mart\'i-Farr\'e and Padr\'o~\cite{MP3} characterized all ideal $3$-homogeneous access structures in which the number of minimal qualified subsets contained in any set of four participants is not equal to three. Later, in 2009, Mart\'i-Farr\'e and Padr\'o~\cite{FP2}  also characterized ideal rank-three access structures in some cases. Also, in 2009, Tassa and Dyn~\cite{TD} studied an ideal secret sharing scheme that realizes compartmented access structures using bivariate interpolation. Recently, in 2021, Janbaz and Bagherpour~\cite{JZB} characterized ideal graph-based $3$-homogeneous access structures. In this paper, we characterize the ideal $k$-homogeneous access structures by utilizing the independent sequence method.

\begin{theorem}\label{main:mainthm}
Suppose $\Gamma$ is a $k$-homogeneous access structure on a set of participants $\mathcal{P}$ such that the number of minimal qualified subsets contained in any set of $k+1$ participants is not equal to $1$ and $k$, containing  $k+1$. Then the following conditions are equivalent.

\begin{itemize}
\item[(1)] $\Gamma$ is a vector space access structure
\item[(2)] $\Gamma$ is an ideal access structure
\item[(3)] $\rho^* (\Gamma) > \frac{k-1}{k}$
\item[(4)] The reduced access structure of $\Gamma$ is an $(k,n)$-threshold access structure.
\end{itemize}

\end{theorem}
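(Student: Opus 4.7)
The plan is to establish the cycle of implications $(1) \Rightarrow (2) \Rightarrow (3) \Rightarrow (4) \Rightarrow (1)$. Three of these are standard and quick. For $(1) \Rightarrow (2)$, a vector space access structure comes with an ideal linear scheme by definition. For $(2) \Rightarrow (3)$, any ideal scheme has $\rho^*(\Gamma) = 1 > (k-1)/k$. For $(4) \Rightarrow (1)$, Shamir's polynomial construction realises the $(k,n)$-threshold access structure as a vector space access structure, and this lifts back from the reduced structure to $\Gamma$ in a routine way. Each of these takes at most a few lines once the objects (ideality, vector space realisation, reduced access structure) are set up.

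The substance of the theorem lies in $(3) \Rightarrow (4)$, which I would prove by contrapositive via the independent sequence method. Suppose the reduced access structure is \emph{not} a $(k,n)$-threshold. Then some $k$-subset $A = \{p_1, \dots, p_k\}$ of reduced participants fails to be a minimal qualified set. The aim is to construct a strictly increasing chain $B_0 = \emptyset \subsetneq B_1 \subsetneq \cdots \subsetneq B_k \subseteq \mathcal{P}$ together with a side set $X \subseteq \mathcal{P} \setminus B_k$ of size $k-1$, such that $B_k$ is unqualified while every $B_i \cup X$ is qualified. The standard entropy bound attached to such an independent sequence then gives $\rho^*(\Gamma) \leq |X|/k = (k-1)/k$, contradicting (3).

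To build the sequence, I would proceed inductively, adjoining one participant at a time to form $B_{i+1}$ from $B_i$. At each step the extension amounts to locating a new participant $q$ such that $B_i \cup X \cup \{q\}$ still contains a minimal qualified subset while the chain remains strictly growing. This is exactly where the hypothesis on $(k+1)$-subsets does the work: inside the relevant $(k+1)$-window formed by $A$ together with one auxiliary participant, the number of minimal qualified subsets is forbidden from being $1$ or $k$, so after excluding the configurations already used in $B_1, \dots, B_i$, at least one more minimal qualified subset remains available for the exchange. Reducedness of the access structure is then used to justify replacing equivalent participants so that the chain is genuinely increasing and so that $B_k$ itself inherits a structure that keeps it unqualified.

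The hardest part will be the case analysis guaranteeing that the exchange can be carried out $k$ times in a row without ever landing in the forbidden local counts of $1$ or $k$ minimal qualified subsets. In particular, one must verify that as the chain extends the windows examined never degenerate into a near-threshold configuration (count $k$) or a nearly-edgeless configuration (count $1$), since those are precisely the cases where the exchange argument would collapse. I expect the argument to proceed by fixing a minimal counterexample $A$ and a carefully chosen order in which to adjoin the elements of $B_k$; the hypothesis then yields the required intermediate counts in $\{0, 2, 3, \dots, k-1, k+1\}$, and the exchange goes through. Once the independent sequence is in hand, the bound $\rho^*(\Gamma) \leq (k-1)/k$ is immediate, completing the contrapositive and closing the cycle of equivalences.
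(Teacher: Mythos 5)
Your handling of the three easy implications $(1)\Rightarrow(2)$, $(2)\Rightarrow(3)$, and $(4)\Rightarrow(1)$ matches the paper exactly. The substantive issue is $(3)\Rightarrow(4)$, and here your plan and the paper's diverge considerably, and yours has a real gap.

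You propose the contrapositive: take a $k$-subset $A$ that fails to be qualified in the reduced structure and build, directly, a single chain $\emptyset \subsetneq B_1 \subsetneq\cdots\subsetneq B_k$ made independent by a fixed side set $X$ of size $k-1$. Two difficulties. First, the independent sequence method as used here (Padr\'o--S\'aez) allows \emph{different} witness sets $X_1,\ldots,X_m$ at each level, with $B_i\cup X_i\in\Gamma$ and $B_{i-1}\cup X_i\notin\Gamma$; the bound $(k-1)/k$ comes from the minimum set $A$ containing $\bigcup X_i$. Collapsing to a single $X$ with ``every $B_i\cup X$ qualified'' is strictly weaker and is not what you will be able to arrange. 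Second, and more seriously, the claim that an unqualified $k$-subset always seeds a length-$k$ independent sequence is precisely the hard content, and nothing in your sketch shows why the exchange step survives for all $k$ steps. You gesture at the $(k+1)$-window hypotheses ruling out counts $1$ and $k$, but never explain where the hypothesis $k+1\in\Omega(k+1,\Gamma)$ enters, and that hypothesis is essential: it is what supplies a starting $(k,k+1)$-threshold ``core.''

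The paper's route is different in kind. It does not build one monolithic independent sequence. It starts from a $(k+1)$-set with $w=k+1$ (using $k+1\in\Omega(k+1,\Gamma)$), grows a maximal $\mathcal{P}'\subseteq\mathcal{P}$ on which $\Gamma_\sim$ restricts to a $(k,m)$-threshold structure, and then argues $\mathcal{P}'=\mathcal{P}$. The obstruction to growth is handled through three lemmas, each of which internally runs a small independent-sequence argument on a carefully chosen $(k+2)$- or $(k+3)$-participant configuration to establish local threshold behaviour. The decisive step is Lemma~\ref{lemma4}: if some $b\notin\mathcal{P}'$ blocks the extension, then $b$ is forced to be \emph{equivalent} (in the sense of the reduction) to $a_k$ or $a_{k+1}$, contradicting that $\Gamma_\sim$ is reduced. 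Note that this is the opposite of what you propose for reducedness: you suggest using it to ``replace equivalent participants'' while constructing the chain, whereas the paper uses reducedness as the endpoint of the argument, the thing that the independent sequences ultimately contradict. Without that structural grow-and-contradict scaffolding, I don't see how your direct construction gets off the ground, so as written the proposal has a genuine gap in the $(3)\Rightarrow(4)$ step.
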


Our paper is organized as follows. In Section 2, we introduce the definitions of an Ideal Secret Sharing Scheme and the Independent Sequence Method. In Section 3, we present several access structures related to this paper. In Section 4, we present the necessary results and lemmas required for proving our main theorem. Finally, in Section 5, we provide the proof of Theorem~\ref{main:mainthm}.

\section{Secret Sharing Scheme}

\noindent A secret sharing scheme consists of a dealer who possesses a secret, a set of $n$ participants, and a collection $\mathcal{F}$ of subsets of participants defined as the access structure. A secret sharing scheme for $\mathcal{F}$ is a method by which the dealer distributes shares of a secret value $k$ to the set of $n$ participants such that any qualified subset in $\mathcal{F}$ can reconstruct the secret value $k$ from their shares, while any unqualified subset not in $\mathcal{F}$ cannot reveal any information about the secret value $k$.

\subsection{Ideal Secret Sharing Schemes}

One parameter used to measure the efficiency of a secret sharing scheme is the information rate, which is defined as the ratio between the length of the secret and the maximum length of the shares given to the participants, as follows. 

\begin{definition}[information rate]

Let $\mathcal{P}$ be the set of all participants, $\mathcal{S}$ be the set of all secret keys, and $K(p)$ be the set of all possible shares given to a participant $p \in \mathcal{P}$. In the secret sharing scheme $\mathcal{F}$, the information rate, denoted by $\rho(\mathcal{F})$, is defined as 

$$ \rho(\mathcal{F}) = \frac{\log|S|}{\max_{p\in \mathcal{P}}\log|K(p)|} \ .$$
\end{definition}

\noindent Since the length of any share is greater than or equal to the length of the secret in the secret sharing scheme, the information rate cannot be greater than one. Therefore, $\rho(\mathcal{F}) =1$ is the optimal situation. When designing a secret sharing scheme for the given access structure $\Gamma$, we may try to maximize the information rate, as defined below.

\begin{definition}[optimal information rate]

In the secret sharing scheme $\mathcal{F}$, the optimal information rate of the access structure $\Gamma$, denoted by $\rho^*(\Gamma)$, is defined as

$$ \rho^*(\Gamma) = \sup ( \rho(\mathcal{F}))$$
\end{definition}

\noindent where the supremum is taken over all possible secret sharing schemes $\mathcal{F}$ with access structure $\Gamma$. Of course, the optimal information rate of an ideal access structure is equal to one.\\

Secret sharing schemes with an information rate equal to one are called {\it ideal secret sharing schemes}. An access structure is ideal if there exists an ideal secret sharing scheme that realizes it. Characterizing ideal access structures and providing bounds on the optimal information rate are two important problems in the  secret sharing schemes. These problems have been studied extensively in several particular families of access structures by many authors~\cite{BD,CT,JZB,FP1,FP2,MP3,DS1,TD}.

\subsection{Independent Sequence Methods} 

To prove the main theorem, we will utilize the independent sequence method, which we introduce in this section. In 1997, Blundo, Santis, Simone, and Vaccaro~\cite{BSSV2} presented the independent sequence method as a way to find upper bounds on the optimal information rate. Later, in 2000, Padr\'o and S\'aez~\cite{PS} introduced a slight generalization of this method, as follows.\\

\begin{definition} [Independent Sequence]Let $\Gamma$ be an access structure on a set of participants $\mathcal{P}$. A sequence of non-empty sets $B_1, B_2, \cdots, B_m$, 
 where 
$$ \emptyset \neq B_1 \subset B_2 \subset \cdots \subset B_m \subset \mathcal{P},$$

\noindent is called {\it independent} if 

\begin{itemize}
\item[1.] $B_m \not \in \Gamma$
\item[2.] there exist $X_1, X_2, \cdots, X_m \subset \mathcal{P}$ such that $B_i \cup X_i \in \Gamma$ and $B_{i-1} \cup X_i \not \in \Gamma$ for all $i=1,2, \cdots, m$, where $B_0=\emptyset$.
\end{itemize}

\end{definition}
\begin{theorem}[Independent Sequence Method~\cite{PS}]
Let $\Gamma$ be an access structure on a set of participants $\mathcal{P}$. Suppose that $\emptyset \neq B_1 \subset B_2 \subset \cdots \subset B_m \subset \mathcal{P}$ is an independent sequence, and let $ A\subset \mathcal{P}$ be a minimum set such that  $ \bigcup^{m}_{i=1} X_i \subset A$  which makes this sequence independent. Then, we have 

\begin{itemize}
\item[(i)] $\rho^*(\Gamma) \leq \frac{|A|}{m+1}$ \ \   \ if  $A \in \Gamma$ 
\item[(ii)] $\rho^*(\Gamma) \leq \frac{|A|}{m}$ \ \   \ if  $A  \not \in \Gamma$.
\end{itemize}

\end{theorem}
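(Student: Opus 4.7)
The plan is to prove the four-way equivalence by establishing the cycle $(1)\Rightarrow(2)\Rightarrow(3)\Rightarrow(4)\Rightarrow(1)$, in which three steps are essentially standard and the entire substance of the theorem lies in $(3)\Rightarrow(4)$. For $(1)\Rightarrow(2)$, any vector space access structure is realized by an ideal scheme (take the secret to be a fixed coordinate of a uniformly random vector and give each participant the appropriate linear functional), so the information rate $1$ is attained. For $(2)\Rightarrow(3)$, ideality gives $\rho^*(\Gamma)=1>\frac{k-1}{k}$ immediately. For $(4)\Rightarrow(1)$, the $(k,n)$-threshold access structure is a vector space access structure via Shamir's scheme, and reintroducing the participants removed during reduction preserves the vector-space property, because such participants are either dummies or assignable to linear functionals already used by equivalent participants.

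The substantive step is $(3)\Rightarrow(4)$, which I would prove by contrapositive using the Independent Sequence Method. Suppose the reduced access structure of $\Gamma$ is not the $(k,n)$-threshold access structure. Then some set $T\subset\mathcal{P}$ of $k+1$ participants fails to have all $\binom{k+1}{k}=k+1$ of its $k$-subsets qualified; equivalently, the number $j$ of minimal qualified $k$-subsets contained in $T$ is at most $k$. Combined with the standing hypothesis $j\neq 1$ and $j\neq k$, we obtain $j\in\{0,2,3,\ldots,k-1\}$. From such a $T$ the goal is to manufacture an independent sequence $\emptyset\neq B_1\subset B_2\subset\cdots\subset B_k\subset\mathcal{P}$ with completions $X_1,\ldots,X_k$ all contained in a common witness set $A$ of cardinality at most $k-1$. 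The Independent Sequence Method then yields $\rho^*(\Gamma)\leq\frac{|A|}{k}\leq\frac{k-1}{k}$, contradicting $(3)$.

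The main obstacle is producing the nested sets $B_i$ and completions $X_i$ meeting the alternating conditions $B_i\cup X_i\in\Gamma$ and $B_{i-1}\cup X_i\notin\Gamma$ while keeping the combined witness set small. The natural construction is to order the elements of $T$ and take each $B_i$ as an incremental union of singletons from $T$, then choose each $X_i$ as a $(k-1)$-subset of $\mathcal{P}\setminus T$ augmented by a single element of $T$ that together complete $B_i$ to a minimal qualified set while failing to qualify $B_{i-1}$. To control $|A|$, I would fix one minimal qualified $k$-subset $Q$ intersecting $T$ suitably and reuse $Q\setminus T$ as the backbone of every $X_i$, which confines $\bigcup_i X_i$ to a set of size $k-1$. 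The alternating qualified/unqualified pattern demanded by the independence conditions is realizable precisely when $j\notin\{1,k\}$; the excluded values correspond to locally rigid threshold-like configurations in which no such obstruction exists, which both explains the role of the hypothesis and explains why those values are excluded in the first place. The residual case $j=0$ is handled separately by invoking the reduced-structure assumption to locate a minimal qualified $k$-set outside $T$, and seeding the sequence from its interaction with $T$.
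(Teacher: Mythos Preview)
Your proposal does not address the statement you were given. The statement is the \emph{Independent Sequence Method} theorem: given an independent sequence $B_1\subset\cdots\subset B_m$ with witness set $A$, one has $\rho^*(\Gamma)\le |A|/(m+1)$ if $A\in\Gamma$ and $\rho^*(\Gamma)\le |A|/m$ if $A\notin\Gamma$. This is an information-theoretic bound on the optimal rate, and the paper does not prove it at all --- it is quoted from Padr\'o--S\'aez~\cite{PS} (building on Blundo et al.) and used as a black box throughout Section~4. A proof would proceed via entropy inequalities for the shares, not via any cycle of implications about $k$-homogeneous structures.

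What you have written is instead an outline for the paper's main Theorem~\ref{main:mainthm} (the four-way equivalence for $k$-homogeneous structures). Even read that way, your $(3)\Rightarrow(4)$ sketch diverges significantly from the paper's argument and has real gaps. The paper does not run a single contrapositive from a bad $(k{+}1)$-set $T$; it uses the standing hypothesis $k{+}1\in\Omega(k{+}1,\Gamma)$ to seed a $(k,k{+}1)$-threshold substructure on some $\mathcal{P}'$, then enlarges $\mathcal{P}'$ one participant at a time, with Lemmas~\ref{lemma2}, \ref{lemma3}, and \ref{lemma4} supplying the case analysis and the explicit independent sequences that force $\rho^*\le(k-1)/k$ whenever the extension fails. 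Your outline never invokes the hypothesis $k{+}1\in\Omega(k{+}1,\Gamma)$, does not engage with the reduction $\Gamma\mapsto\Gamma_\sim$ (which is where the equivalence-class argument of Lemma~\ref{lemma4} enters), and the sentence ``fix one minimal qualified $k$-subset $Q$ intersecting $T$ suitably and reuse $Q\setminus T$ as the backbone of every $X_i$'' is not a construction: it does not explain how the alternating conditions $B_i\cup X_i\in\Gamma$, $B_{i-1}\cup X_i\notin\Gamma$ are arranged for $j\in\{2,\ldots,k-1\}$, nor how the $j=0$ case is actually handled.
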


\section{Access Structures}

In a secret sharing scheme, the qualified subsets are the sets of participants who can reconstruct the secret value from their shares. The collection of qualified subsets of participants is called the access structure of the secret sharing scheme. Conversely, the unqualified subsets are the sets of participants who cannot obtain any information about the secret value from their shares.
In any secret sharing scheme, the access structure is said to  be monotone if the superset of any qualified subset is also a qualified subset, and it is determined by the family of minimal qualified subsets of participants. The collection of minimal qualified subsets of participants is called the basis of the access structure. Moreover, in a secret sharing scheme, every participant must belong to at least one minimal qualified subset.

\subsection{$k$-Homogeneous Access Structures}

A hypergraph is a generalization of a graph in which hyperedges may connect more than two vertices. A $k$-uniform hypergraph (or $k$-hypergraph) is a hypergraph in which each hyperedge has exactly $k$ vertices, or is a $k$-hyperedge.
In particular, the complete $k$-uniform hypergraph on $n$ vertices has all $k$-subsets of $\{1,2,\dots,n\}$ as $k$-hyperedges.
A $k$-hypergraph access structure is represented by a $k$-uniform hypergraph $\mathcal{H}$ in which the participants correspond to the vertices of $\mathcal{H}$. A set of vertices can reconstruct the secret value from their shares if they are connected by a $k$-hyperedge, while the set of non-adjacent vertices receives no information on the secret. A $k$-hypergraph access structure is also called a {\it $k$-homogeneous access structure}.\\

A $k$-homogeneous access structure is determined by the family of minimal qualified subsets, each consisting of exactly $k$ participants. Recall that in the $(k,n)$-threshold secret sharing scheme, the qualified subsets are formed by all subsets with at least $k$ participants among the set of $n$ participants. For example, consider an access structure on a set of five participants $P=\{p_1,p_2,p_3,p_4,p_5\}$ with minimal qualified subsets $A_1=\{p_1,p_2,p_3\}, A_2=\{p_2,p_3,p_4\}$, and $A_3=\{p_3,p_4,p_5\}$. This access structure is not $(3,n)$-threshold but is $3$-homogeneous. Note that there is a one-to-one correspondence between $k$-uniform hypergraphs and $k$-homogeneous access structures. Furthermore, complete $k$-uniform hypergraphs correspond to $(k,n)$-threshold access structures. Many authors~\cite{BF1, BFM, BL} have constructed secret sharing schemes for $k$-homogeneous access structures using various techniques.

\subsection{Vector Space Access Structures}
Let $\Gamma$ be an access structure defined on a set of participant $\mathcal{P}$, and let $D \not \in \mathcal{P}$ be a dealer. We say that an access structure $\Gamma$ is a {\it vector space access structure} if
there exists a function 

$$f: \mathcal{P} \cup {D} \rightarrow E\backslash \{ 0 \}$$

\noindent where $E$ is a vector space over a finite field $K$,
such that if $A \subseteq \mathcal{P}$ then $ A \in \mathcal{P}$ if and only if  the vector $f(D)$  can be expressed as a linear combination of the vectors in the set $ f(A) = \{f(p) | \ p \in A \}$.
An example of a vector space access structure is the $(k,n)$-threshold access structure,  which consists of all subsets with at least $k$ participants among the set of $n$ participants.  (See ~\cite{DS1}). The relationship between vector space access structures and ideal access structures is as follows.

\begin{theorem}\label{idealtheorem}\cite{MP2, FP2} The vector space access structures are ideal.
\begin{proof} Let  $\Gamma$ be a  vector space access structure defined on a set of participant $\mathcal{P}$, and let $D \not \in \mathcal{P}$ be a dealer. Then, there exists a function 
$f: \mathcal{P} \cup {D} \rightarrow E\backslash \{ 0 \}$, where $E$ is a vector space over a finite field $K$,
such that if $A \subseteq \mathcal{P}$ then $ A \in \mathcal{P}$ if and only if  the vector $f(D)$  can be expressed as a linear combination of the vectors in the set $ f(A) = \{f(p) | \ p \in A \}$. 
Given a secret value $k \in K$, the dealer $D$ selects  an element $v\in E$ at random such that $v\cdot f(D) = k$, where $f(D)$ is a linear combination of the vectors in the set $\{ f(p) | \ p \in D\}$. The dealer then distributes   shares $s_p = v \cdot f(p)$ to each participant $p \in \mathcal{P}$. Note that the ratio between the length of the secret and the maximum length of the shares given to the participants is one. Therefore, we conclude that $\Gamma$ is ideal.
\end{proof}
 
\end{theorem}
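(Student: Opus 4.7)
The plan is to realise $\Gamma$ by an explicit scheme whose secret and whose shares all live in the base field $K$, making the information rate obviously equal to one. Given the defining data $f : \mathcal{P} \cup \{D\} \to E \setminus \{0\}$ with $E$ a finite-dimensional vector space over $K$, I would set up the following scheme: to share a secret $k \in K$, the dealer picks a vector $v \in E$ uniformly at random from the affine hyperplane $\{w \in E : \langle w, f(D)\rangle = k\}$ (using the standard inner product on $E$ relative to a fixed basis), and then distributes to each participant $p \in \mathcal{P}$ the share $s_p = \langle v, f(p)\rangle \in K$.

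First I would verify reconstruction. If $A \in \Gamma$, the vector-space property provides coefficients $\lambda_p \in K$ with $f(D) = \sum_{p \in A} \lambda_p f(p)$, and linearity of the inner product gives
\[
\sum_{p \in A} \lambda_p s_p \;=\; \Big\langle v,\; \sum_{p \in A} \lambda_p f(p) \Big\rangle \;=\; \langle v, f(D)\rangle \;=\; k,
\]
so the members of $A$ recover the secret by a single linear combination.

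Next I would check perfect secrecy. Suppose $A \notin \Gamma$, so that $f(D) \notin W := \mathrm{span}\{f(p) : p \in A\}$. The goal is to show that, for any tuple $(s_p)_{p \in A}$ that occurs with positive probability, the conditional distribution of $k = \langle v, f(D)\rangle$ is uniform on $K$. This reduces to the following linear-algebra fact: because $f(D) \notin W$, the functional $w \mapsto \langle w, f(D)\rangle$ restricted to the affine subspace $\{ w \in E : \langle w, f(p)\rangle = s_p \text{ for all } p \in A\}$ is non-constant, and hence it is surjective onto $K$ with fibres of equal cardinality. Consequently every secret value is equally consistent with the shares seen by $A$.

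The main obstacle is this perfect-secrecy step; everything else is routine. In particular, one must be slightly careful that the constraints defining the dealer's hyperplane and each participant's share are all non-degenerate (so the random choice is well-defined and each share is a genuine element of $K$), but this is immediate from $f$ taking values in $E \setminus \{0\}$. The rate computation $\log|K|/\log|K| = 1$ then follows directly, giving ideality of $\Gamma$.
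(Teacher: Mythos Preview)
Your proposal is correct and follows essentially the same construction as the paper: pick a random vector $v$ in the hyperplane $\{w : \langle w, f(D)\rangle = k\}$ and hand out the shares $s_p = \langle v, f(p)\rangle$, so that both secret and shares lie in $K$ and the rate is $1$. The paper's proof is in fact terser than yours---it writes down the scheme and observes that the share-to-secret ratio is one, without spelling out the reconstruction and perfect-secrecy arguments that you supply; your added verification is sound and only strengthens the write-up.
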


\subsection{Reduced Access Structures}
Let $\Gamma$ be an access structure defined on a set of participant $\mathcal{P}$.  We say that an access structure $\Gamma$ is a {\it reduced access structure} if there are no pairs of distinct participants that are equivalent.
		In relation to the access structure $\Gamma$, we define the equivalence relation $\sim$  on $\mathcal{P}$ as follows.

\begin{definition} [equivalence relation]
The two participants $a,b \in \mathcal{P}$ are said to be equivalent, denoted by $a \sim b$, if either $a=b$ or $a\neq b$ and the following two conditions are satisfied:
\begin{itemize}
\item[(i)] $\{ a, b \} \nsubseteq A $ \ \  if \ $A \in \Gamma_0$
\item[(ii)] if $ A \subset \mathcal{P}\backslash \{a, b\}$, then $A \cup \{a  \} \in \Gamma_0$ if and only if $ A \cup \{b \} \in \Gamma_0$
\end{itemize}
\noindent where $\Gamma_0$ is a family of minimal qualified subsets.
\end{definition}

Let us define the equivalence classes, induced by $\sim$, on the set of participants $\mathcal{P}$  as $\mathcal{P} / \sim = \{ [a_1], [a_2],\cdots, [a_m]\}$. Then, an access structure $\Gamma_{\sim}$ on $\mathcal{P} / \sim$ can be obtained naturally  from $\Gamma$ by identifying equivalent participants. The reduced access structure of $\Gamma$ is denoted as $\Gamma_{\sim}$, and it is isomorphic to $\Gamma ( \{a_1, a_2, \cdots, a_m \})$. Therefore, we have $\rho^* (\Gamma_{\sim}) \geq \rho^* (\Gamma)$. Furthermore, $\Gamma$ is a vector space access structure if and only if $\Gamma_{\sim}$ is as well.

\section{Lemmas}
In this section, we present several lemmas to prove the main result. The following lemma  corresponds to the $s=2$ case of Lemma 4.2, and we omit its proof. Let us define $w(Q,\Gamma)$ as the number of minimal qualified subsets contained in the set $Q$. Let $\Omega(m,\Gamma)$ be the set of possible values of $w(Q,\Gamma)$ with $|Q|=m$. This implies that  $\Omega(m,\Gamma)$ collects the numbers of minimal qualified subsets contained in any set of $m$ participants.
 
 \begin{lemma}\label{lemma1}
 Let $\Gamma$ be a $k$-homogeneous access structure defined on a set of participants $\mathcal{P}$ such that
the number of minimal qualified subsets contained in any set of $k+1$ participants is not equal to $k$, and $\rho^* (\Gamma) > \frac{k-1}{k}$.
Let us consider any $k+2$ distinct participants $u_1, u_2, \cdots, u_k, u_{k+1}, v \in \mathcal{P}$  such that
\begin{equation}\label{eqq:1111}
    \{u_1,u_2, u_{l_1}, u_{l_2}, \cdots, u_{l_{k-2}}\} \in \Gamma
\end{equation}

\noindent for all $ 3 \leq l_1, l_2, \cdots, l_{k-2} \leq k+1$.\\

 \noindent Then either
 $ w(\{u_1, u_2, u_3, \cdots, u_k, u_{k+1} \},  \ \Gamma ) = k+1 $ or
 $ \{ u_3,u_4, u_5, \cdots, u_k, u_{k+1}, v \} \not \in \Gamma $.\\
 \end{lemma}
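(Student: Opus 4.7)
The plan is to argue by contradiction using the Independent Sequence Method of Padr\'o--S\'aez (Theorem~\ref{main:mainthm}'s supporting Theorem~2.3 above). I suppose that both conclusions fail, that is,
\[
w(\{u_1, u_2, \ldots, u_{k+1}\}, \Gamma) \neq k+1 \quad \text{and} \quad \{u_3, u_4, \ldots, u_{k+1}, v\} \in \Gamma,
\]
and I aim to produce an independent sequence exhibiting $\rho^*(\Gamma) \leq (k-1)/k$, contradicting the hypothesis.

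First I would pin down which value of $w$ is possible. Condition~(\ref{eqq:1111}) furnishes exactly $k-1$ qualified $k$-subsets of $\{u_1, \ldots, u_{k+1}\}$, namely the subsets obtained by deleting some $u_i$ with $i \geq 3$; so $w \geq k-1$. The hypothesis on $\Gamma$ forbids $w = k$, and we have assumed $w \neq k+1$. Hence $w = k-1$, which \emph{forces} the two remaining $k$-subsets to be non-qualified:
\[
\{u_1, u_3, u_4, \ldots, u_{k+1}\} \notin \Gamma \quad\text{and}\quad \{u_2, u_3, u_4, \ldots, u_{k+1}\} \notin \Gamma.
\]
Combined with the assumption $\{u_3, \ldots, u_{k+1}, v\} \in \Gamma$ and the qualified $k$-subsets supplied by~(\ref{eqq:1111}), I now have a concrete list of qualified and non-qualified sets to build an independent chain from.

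Second, I would build the chain by peeling elements off of the non-qualified set $\{u_1, u_3, u_4, \ldots, u_{k+1}\}$. The skeleton is to take $B_m = \{u_1, u_3, u_4, \ldots, u_{k+1}\}$ (certified non-qualified above) with $X_m = \{u_2\}$; then $B_m \cup X_m \supseteq \{u_1, u_2, u_3, \ldots, u_k\} \in \Gamma$ by (\ref{eqq:1111}), while $B_{m-1} \cup X_m = \{u_2, u_3, \ldots, u_{k+1}\} \notin \Gamma$ provided $B_{m-1} = \{u_3, \ldots, u_{k+1}\}$. Next, take $X_{m-1} = \{v\}$: by hypothesis $B_{m-1} \cup \{v\} = \{u_3, \ldots, u_{k+1}, v\} \in \Gamma$, and $B_{m-2} \cup \{v\}$ has only $k-1$ elements, hence is not qualified. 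Continuing downwards, I remove one $u_j$ at a time from $B_i$ and let the corresponding $X_i$ be a set containing $\{u_1, u_2\}$ together with the $u_j$'s that were removed later in the chain, chosen so that $B_i \cup X_i$ realizes a qualified subset from (\ref{eqq:1111}) while $B_{i-1} \cup X_i$ has cardinality $k-1$ and is therefore non-qualified automatically.

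Finally, I would apply Theorem~2.3: count the length $m$ of the chain and bound $|A|$ where $A = \bigcup_{i=1}^{m} X_i$. Since the $X_i$'s overlap heavily (they all share $\{u_1, u_2\}$ past the first two steps, and the growing tail uses the same $u_j$'s that were peeled off the $B_i$'s), the resulting ratio $|A|/m$ (if $A \notin \Gamma$) or $|A|/(m+1)$ (if $A \in \Gamma$) should come out to at most $(k-1)/k$, producing the contradiction.

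The main obstacle will be the accounting: a careless choice gives $|A|/m = 1$ or $k/(k+1)$, which is not tight enough. The delicate point is to keep $X_i \cap \{u_3, \ldots, u_{k+1}\}$ small enough at each step so that $B_{i-1} \cup X_i$ does not accidentally contain a (\ref{eqq:1111})-type qualified $k$-subset, while still letting $B_i \cup X_i$ cover one; balancing these two requirements is what controls the size of $A$ relative to $m$, and verifying that this balance produces the ratio $(k-1)/k$ is the core of the argument. Once this estimate is in hand, the contradiction with $\rho^*(\Gamma) > (k-1)/k$ finishes the proof.
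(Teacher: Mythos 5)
Your reduction to $w = k-1$ is correct and matches the paper's setup, and identifying the two non-qualified $k$-subsets $\{u_1, u_3,\ldots,u_{k+1}\}$ and $\{u_2, u_3,\ldots,u_{k+1}\}$ is exactly the right starting point. But the chain you sketch cannot reach the bound $\rho^*(\Gamma) \le (k-1)/k$, and the reason is structural, not just "careful accounting." With $B_m = \{u_1,u_3,\ldots,u_{k+1}\}$ and $B_{m-1} = \{u_3,\ldots,u_{k+1}\}$, you are forced to take $X_m = \{u_2\}$ (it is the only singleton making $B_m\cup X_m$ qualified without making $B_{m-1}\cup X_m$ qualified) and $X_{m-1} = \{v\}$, so $u_2, v \in A$. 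On the other hand the chain must extend down to $|B_1|=1$ to get length $m=k$, and then $|X_1| \ge k-1$ with $B_1 \cap X_1 = \emptyset$; any such $X_1$ contains at most one of $\{u_2, v\}$ (if $B_1\cup X_1$ is an eqq:1111-type set, then $v\notin X_1$; if $B_1\cup X_1 = \{u_3,\ldots,u_{k+1},v\}$, then $u_2\notin X_1$). Hence $|A| \ge |X_1| + 1 \ge k$. With $m=k$, the Independent Sequence Method then gives at best $\rho^*(\Gamma) \le k/(k+1)$ if $A\in\Gamma$, or $\le 1$ if $A\notin\Gamma$, and $k/(k+1) > (k-1)/k$, so there is no contradiction. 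No choice of the lower $X_i$'s fixes this; the overlap you invoke controls nothing, since only the union $A = \bigcup X_i$ enters the bound.

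The paper's proof of Lemma~\ref{theorem1} (of which Lemma~\ref{lemma1} is the $s=2$ specialisation) gets around this by a \emph{case split} you did not anticipate: it distinguishes whether $\{u_1, u_3,\ldots,u_k, v\}\in\Gamma$ or not. In the "not qualified" case the chain is grown upward, $B_1=\{u_1\},\ldots,B_k = \{u_1,u_3,\ldots,u_k,v\}$, with top set non-qualified by that very case assumption, and $A = \{u_2,u_3,\ldots,u_{k-1},u_{k+1}\}$ has $k-1$ elements and omits $v$. In the "qualified" case the chain is $B_1=\{u_3\},\ldots,B_k=\{u_2,u_3,\ldots,u_{k+1}\}$, with top set non-qualified by the $w=k-1$ deduction, and $A = \{u_1, u_4,\ldots,u_k, v\}$ has $k-1$ elements, omits $u_2, u_3, u_{k+1}$, and uses the case assumption to certify $B_i\cup X_i\in\Gamma$ for the early steps. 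In each case one keeps $u_1$ or $u_2$ (not both) out of $A$, and $v$ is included only in the second case. Your single-chain plan, by pinning $u_2$, $v$, and $u_1$ all inside $A$ at once, makes the ratio provably too large, so the proposal has a genuine gap that no amount of tuning the $X_i$'s can close without first introducing the case dichotomy.
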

 
 \begin{lemma}[General version of Lemma~\ref{lemma1}]\label{theorem1}
 Suppose that $\Gamma$ is a $k$-homogeneous access structure defined on a set of participants $\mathcal{P}$, such that
the number of minimal qualified subsets contained in any set of $k+1$ participants is not equal to $k$, and $\rho^* (\Gamma) > \frac{k-1}{k}$.
Let us consider any $k+2$ distinct participants $u_1, u_2, \cdots, u_k, u_{k+1}, v \in \mathcal{P}$ such that
\begin{equation}\label{eqq:1}
    \{u_1,u_2,\cdots, u_s, u_{l_1}, u_{l_2}, \cdots, u_{l_{k-s}}\} \in \Gamma
\end{equation}

\noindent for all $ s+1 \leq l_1, l_2, \cdots, l_{k-s} \leq k+1$, where $s \geq 2$.\\

 \noindent Then there exist two participants $u_i$ and $u_j$, where $ 1\leq i,j \leq k+1$, such that either
 $$ w(\{u_1, u_2, u_3, \cdots, u_k, u_{k+1} \},  \ \Gamma ) = k+1 $$ or
 $$ \{ u_{m_1},u_{m_2}, u_{m_3}, \cdots, u_{m_{k-1}}, v \} \not \in \Gamma ,$$
 where $ i, j \not \in \{m_1, m_2, m_3, \cdots, m_{k-1}\}$ and $1 \leq m_1, m_2, m_3, \cdots, m_{k-1} \leq k+1$.
\end{lemma}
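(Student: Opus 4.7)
My plan is to argue by contradiction: I suppose that $w(\{u_1,\ldots,u_{k+1}\},\Gamma)\neq k+1$ and that, for every pair of distinct indices $i,j\in\{1,\ldots,k+1\}$, the $k$-element set $T_{ij}:=\{v\}\cup\{u_\ell:\ell\in\{1,\ldots,k+1\}\setminus\{i,j\}\}$ is a qualified subset of $\Gamma$. A contradiction derived from these two assumptions will yield the existence of the claimed pair $u_i,u_j$.

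For each $r\in\{1,\ldots,k+1\}$, I would consider the $(k+1)$-element set $R_r:=\{v,u_1,\ldots,u_{k+1}\}\setminus\{u_r\}$. Since $\Gamma$ is $k$-homogeneous, every minimal qualified subset contained in $R_r$ is a $k$-subset of $R_r$ that belongs to $\Gamma$, and $R_r$ has exactly $k+1$ such $k$-subsets: removing $v$ from $R_r$ gives $S_r:=\{u_1,\ldots,u_{k+1}\}\setminus\{u_r\}$, and removing $u_t$ from $R_r$ for each $t\in\{1,\ldots,k+1\}\setminus\{r\}$ gives exactly $T_{rt}$. The contradiction assumption makes all $k$ sets $T_{rt}$ qualified, so $w(R_r,\Gamma)$ is $k+1$ if $S_r\in\Gamma$ and $k$ otherwise.

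Applying the global hypothesis that $w(Q,\Gamma)\neq k$ for every $(k+1)$-subset $Q$ to the case $Q=R_r$ forces $w(R_r,\Gamma)=k+1$, and in particular $S_r\in\Gamma$. Since $r$ was arbitrary, every $S_r$ is qualified, hence $w(\{u_1,\ldots,u_{k+1}\},\Gamma)=k+1$, contradicting the initial assumption and completing the proof.

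The only subtle step is the enumeration of the $k$-subsets of $R_r$ as $\{S_r\}\cup\{T_{rt}:t\neq r\}$; everything else is pure bookkeeping. Interestingly, this direct counting argument uses neither hypothesis~(\ref{eqq:1}) nor the rate bound $\rho^*(\Gamma)>\frac{k-1}{k}$ — those appear to be inherited from the surrounding context of Lemma~\ref{lemma1} — so if the authors intended instead to route through the independent sequence method, the main obstacle would be to manufacture an explicit chain $B_1\subset\cdots\subset B_m\subset\mathcal{P}$ whose minimal covering set $A$ forces $\rho^*(\Gamma)\leq\frac{k-1}{k}$ under the contradiction hypothesis, a route I would only attempt if the clean counting approach above failed.
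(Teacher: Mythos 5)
Your counting argument is correct and does prove the lemma exactly as stated, and you are right that it uses neither hypothesis~(\ref{eqq:1}) nor the bound $\rho^*(\Gamma)>\frac{k-1}{k}$. The key enumeration — the $k+1$ size-$k$ subsets of $R_r=\{v\}\cup\{u_\ell:\ell\neq r\}$ are $S_r=\{u_\ell:\ell\neq r\}$ together with the $k$ sets $T_{rt}$ for $t\neq r$, so $k\notin\Omega(k+1,\Gamma)$ forces $S_r\in\Gamma$ whenever all $T_{rt}\in\Gamma$ — is sound, and since each $r$ then yields $S_r\in\Gamma$ you get $w(\{u_1,\dots,u_{k+1}\},\Gamma)=k+1$, the desired contradiction. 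This is a genuinely different and considerably more elementary route than the paper's: the paper first combines $w\neq k+1$, $k\notin\Omega(k+1,\Gamma)$, and hypothesis~(\ref{eqq:1}) to produce indices $i,j\in\{1,\dots,s\}$ with $\{u_1,\dots,u_{k+1}\}\setminus\{u_i\}\notin\Gamma$ and $\{u_1,\dots,u_{k+1}\}\setminus\{u_j\}\notin\Gamma$ (their condition~(\ref{eqq:2})), and then runs two independent-sequence constructions to force $\rho^*(\Gamma)\leq\frac{k-1}{k}$. The payoff of the paper's longer route is precisely the extra control over $i$ and $j$: it pins the bad pair to $\{1,\dots,s\}$, so that the complementary index set $\{m_1,\dots,m_{k-1}\}$ necessarily contains all of $\{s+1,\dots,k+1\}$. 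That refinement is not recorded in the lemma's stated conclusion, but it is what a later application actually invokes — in one of the claims inside the proof of Lemma~\ref{lemma4} the authors ``apply Lemma~\ref{theorem1} with $u_i=a_{l_{k-t-1}}$, $u_j=a_k$, $v=x_t$'' and then deduce non-membership of a \emph{specific} $(k-1)$-subset together with $v$, which is only legitimate if one knows which pair $i,j$ comes out. Your proof, being purely existential over $i,j$, gives no such handle and hence would not serve as a drop-in replacement in that downstream use. In short: your argument is a correct and much cleaner proof of the lemma as literally stated, and it exposes that the statement as written is weaker than what the paper's proof actually delivers and later relies on.
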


 \begin{proof}
 
 \begin{sloppypar}
 Let us assume that $w(\{u_1, u_2, u_3, \cdots, u_k, u_{k+1} \},\:\Gamma) \neq k+1$ holds. This implies that the number of minimal qualified subsets contained in the set of $k+1$ participants, $\{u_1, u_2, u_3, \cdots, u_k, u_{k+1} \}$, is not equal to $k+1$.
 Also, by using the condition $k\not \in \Omega(k+1, \Gamma)$, which means that the number of minimal qualified subsets contained in any set of $k+1$ participants is not equal to $k$,   we can conclude that there exist $i$ and $j$ such that
 \begin{align} \label{eqq:2}
\{u_i, u_{m_1},u_{m_2}, u_{m_3}, \cdots, u_{m_{k-1}}\} \notin \Gamma {\text{  \  and \  \ }} 
\{u_j, u_{m_1},u_{m_2},  \cdots, u_{m_{k-1}}\} \notin \Gamma. 
  \end{align} 
  \noindent where $ i, j \not \in \{m_1, m_2, \cdots, m_{k-1}\}$, $1 \leq i,j, m_1,\cdots, m_{s-2} \leq s$, $ s+1\leq m_{s-1},\cdots,m_{k-1}  \leq k+1$.\\
  
 \noindent Note that $\{u_1, u_2, u_3, \cdots, u_k, u_{k+1}\} = \{u_i,u_j,u_{m_1},u_{m_2}, u_{m_3},\cdots, u_{m_{k-1}}\}$. Now we need to show that $\{u_{m_1}, u_{m_2}, u_{m_3}, \cdots, u_{m_{k-1}}, v\}\notin \Gamma$.  
 Let us consider the two cases as follows: 
 $\{u_i,u_{m_1}, u_{m_2}, u_{m_3}, \cdots, u_{m_{k-2}},v\}\notin \Gamma$ or $\{u_i, u_{m_1}, u_{m_2},u_{m_3}, \cdots, u_{m_{k-2}}, v\}\in \Gamma$. \\

{\bf{Case I: $\{u_i, u_{m_1}, u_{m_2},  \cdots, u_{m_{k-2}}, v \}\notin \Gamma$}}.

 In this case, let us first prove that
$\{ u_i, u_{m_1},  \cdots, u_{m_{k-2}}, u_{m_{k-1}}, v\} \notin \Gamma $. Let us assume otherwise, that is,
the set of $k+1$ different participants  $\{u_i, u_{m_1},  \cdots, u_{m_{k-2}}, u_{m_{k-1}}, v\}$ is in $\Gamma$. We can consider  the following subsets of participants $\mathcal{P}$: $B_1 = \{u_i\}$,$\:B_2=\{u_i,u_{m_1}\}$,  $\:B_3=\{u_i,u_{m_1},u_{m_2}\},\cdots,  B_{k-1}=\{u_i,u_{m_1},u_{m_2},\cdots,u_{m_{k-2}}\},$ and $\:B_k=\{u_i,u_{m_1},u_{m_2}\cdots,u_{m_{k-2}},v\}$. 
From the condition of {Case I}, we have  $ \emptyset \neq B_1 \subset B_2 \subset \cdots \subset B_k =\{u_i,u_{m_1}, u_{m_2},\cdots,u_{m_{k-2}},v\}\not \in \Gamma$. Let us consider a subset $A=\{u_j, u_{m_1}, u_{m_2}, \cdots, u_{m_{k-3}}, u_{m_{k-1}} \} \subseteq \mathcal{P}$ consisting of $k-1$ participants, where $1 \leq j, m_1,\cdots, m_{s-2} \leq s$, and $ s+1\leq m_{s-1},\cdots,m_{k-1}  \leq k+1$. Note that $A \not \in \Gamma$. We can now consider the subsets of $A$ as follows:  $X_1=\{u_j,u_{m_1},u_{m_2},\cdots,u_{m_{k-3}},u_{m_{k-1}}\},\:X_2=\{u_j,u_{m_2},u_{m_3},\cdots,u_{m_{k-3}},u_{m_{k-1}}\},\:X_3=\{u_j,u_{m_3},\cdots,u_{m_{k-3}},u_{m_{k-1}}\},\cdots,X_{k-2}=\{u_j,u_{m_{k-1}}\}$, $\:X_{k-1}=\{u_j\}$, and $\:X_k=\{u_{m_{k-1}}\}$.  \\

By using the condition (\ref{eqq:1}),  we derive that $B_1 \cup X_1 = \{u_i, u_j, u_{m_1}, \cdots,u_{m_{k-3}},u_{m_{k-1}}\}\in\Gamma$, $ \cdots,$ $ B_{k-2} \cup X_{k-2} = \{u_i, u_j, u_{m_1}, \cdots,u_{m_{k-3}}, u_{m_{k-1}}\} \in \Gamma,$ $ B_{k-1} \cup X_{k-1} = \{u_i,u_j,u_{m_1}, \cdots,u_{m_{k-3}}, u_{m_{k-2}}\} \in \Gamma$. From the assumption, 
we also observe that $B_k \cup X_k = \{u_i,u_{m_1},\cdots,u_{m_{k-2}}, u_{m_{k-1}}, v \} \in \Gamma$. Since the set of $k-1$ participants can not be in $\Gamma$, we derive that
$B_0\cup X_1=\{u_j,u_{m_1},u_{m_2},\cdots, u_{m_{k-3}},u_{m_{k-1}}\} \notin \Gamma,  B_1 \cup X_2 = \{u_i,u_j,u_{m_2},\cdots,u_{m_{k-3}},u_{m_{k-1}}\} \notin \Gamma,\cdots,  B_{k-2} \cup X_{k-1} = \{u_i, u_j,u_{m_1},\cdots,u_{m_{k-3}}\} \notin \Gamma$.
By using (\ref{eqq:2}), we can also derive that  $ B_{k-1} \cup X_k = \{u_i,u_{m_1}, u_{m_2}, \cdots, u_{m_{k-2}}, u_{m_{k-1}}\}\notin \Gamma$.\\

 Then the sequence $\phi \neq B_1 \subseteq B_2 \subseteq \cdots \subseteq B_k \notin \Gamma$ is made independent by the set $A=\{u_j, u_{m_1},\cdots,u_{m_{k-3}},u_{m_{k-1}}\} \notin \Gamma$. Therefore, using the independent sequence method, we can conclude that $\rho^*(\Gamma) \leq \frac{|A|}{k}=\frac{k-1}{k}$, which leads to  a contradiction. Hence, we conclude that $ \{u_i,u_{m_1},u_{m_2}, \cdots,u_{m_{k-2}},u_{m_{k-1}}, v \} \notin \Gamma$, which implies that $\{u_{m_1},u_{m_2},\cdots,u_{m_{k-2}}, u_{m_{k-1}}, v \} \notin \Gamma$.\\

 {\bf{Case II: $\{u_i, u_{m_1}, u_{m_2},  \cdots, u_{m_{k-2}}, v \}\in \Gamma$}}.

  In this case, we need to prove $\{u_{m_1},u_{m_2},\cdots, u_{m_{k-1}}, v \} \notin \Gamma$. Let us assume the opposite, that is, $\{u_{m_1},u_{m_2},\cdots, u_{m_{k-1}}, v \} \in \Gamma$. We can consider the following subsets of participants $\mathcal{P}$:
    $B_1 = \{u_{m_1}\},\:B_2=\{u_{m_1},u_{m_2}\},\cdots,B_{k-1}=\{u_{m_1},u_{m_2}\cdots,u_{m_{k-1}}\}$, and $B_k=\{u_j,u_{m_1},\cdots,u_{m_{k-1}}\}$. By using (\ref{eqq:2}), we have $ \emptyset \neq B_1 \subset B_2 \subset \cdots \subset B_k =\{u_j,u_{m_1},u_{m_2}\cdots,u_{m_{k-2}},u_{m_{k-1}}\}\not \in \Gamma$.
    Let us consider a subset $A=\{u_i, u_{m_2}, u_{m_3}, \cdots, u_{m_{k-3}}, u_{m_{k-2}}, v \} \subseteq \mathcal{P}$ consisting of $k-1$ participants.
    Note that $A \not \in \Gamma$. Now we can consider the subsets of $A$ as follows:
    $X_1=\{u_i,u_{m_2},\cdots,u_{m_{k-2}}, v \}$,$\:X_2=\{u_i,u_{m_3},\cdots,u_{m_{k-2}},v\}$,$\:X_3=\{u_i,u_{m_4},\cdots,u_{m_{k-2}},v\}$,
    $\cdots,X_{k-3}=\{u_i,u_{m_{k-2}},v\}$,$\:X_{k-2}=\{u_i,v\}$,$\:X_{k-1}=\{v\}$,$\:X_k=\{u_i\}$. \\

 Since $\{u_i, u_{m_1}, u_{m_2},  \cdots, u_{m_{k-2}}, v\}\in \Gamma$,  we  can observe that 
  $B_1 \cup X_1 =$ $ \{u_i,u_{m_1},u_{m_2},\cdots, u_{m_{k-2}}, v\}$ $\in\Gamma$, $\ B_2 \cup X_2=\{u_i,u_{m_1},u_{m_2},\cdots,u_{m_{k-2}},v\}$ $\in \Gamma$, $\cdots,
  \ B_{k-2} \cup X_{k-2}$ $ = \{u_i,u_{m_1},u_{m_2},\cdots,u_{m_{k-2}}, v\}\in \Gamma$. 
 Moreover, since the set of $k-1$ participants can not be in $\Gamma$, we can also observe that 
 $ B_0 \cup X_1=\{u_i,u_{m_2},\cdots,u_{m_{k-2}}, v\} \notin \Gamma, \  B_1 \cup X_2 = \{u_i,u_{m_1},u_{m_3},\cdots,u_{m_{k-2}}, v\} \notin \Gamma,\cdots, B_{k-3} \cup X_{k-2} = \{u_i,u_{m_1},\cdots,u_{m_{k-3}}, v\} \notin \Gamma,\: B_{k-2} \cup X_{k-1} = \{u_{m_1},\cdots,u_{m_{k-2}}, v\} \notin \Gamma$. From equation (\ref{eqq:2}), we can see that $B_{k-1}\cup X_k = \{u_i,u_{m_1}, u_{m_2}, \cdots, u_{m_{k-2}}, u_{m_{k-1}}\}\notin \Gamma$ and 
  $B_k = \{u_j, u_{m_1},u_{m_2}, \cdots, u_{m_{k-2}}, u_{m_{k-1}}\} \notin \Gamma$. Additionally, from the assumption, we have
  $B_{k-1} \cup X_{k-1} = \{u_{m_1},u_{m_2},\cdots, u_{m_{k-1}}, v\} \in \Gamma$. 
  By using the condition (\ref{eqq:1}),  we derive that $ \{u_i, u_j, u_{m_1}, \cdots,u_{m_{k-3}},u_{m_{k-1}}\}\in\Gamma$, which implies that $B_k \cup X_k = \{ u_i, u_j, u_{m_1}, \cdots, u_{m_{k-2}}, u_{m_{k-1}}\} \in \Gamma$.\\

  Then the sequence $\phi \neq B_1 \subseteq B_2 \subseteq \cdots \subseteq B_k \notin \Gamma$ is made independent by the set $A=\{u_i,u_{m_2}, u_{m_3}, \cdots,u_{m_{k-3}},u_{m_{k-2}}, v\} \notin \Gamma$. Hence, using the independent sequence method, we can conclude that $\rho^*(\Gamma) \leq \frac{|A|}{k}=\frac{k-1}{k}$, which leads to a contradiction. Therefore, we conclude that $ \{u_{m_1},u_{m_2}, \cdots,u_{m_{k-2}},u_{m_{k-1}}, v\} \notin \Gamma$.  
  \end{sloppypar}
  \end{proof}

 \begin{lemma}\label{lemma2}
 Let $\Gamma$ be a $k$-homogeneous access structure on a set of participants $\mathcal{P}$ such that  $\rho^* (\Gamma) > \frac{k-1}{k}$.
Let $a_1, a_2, \cdots, a_k, a_{k+1}, b_1, b_2 \in \mathcal{P}$ be $k+3$ different participants satisfying the following two conditions:
\begin{align}\label{eq:3}
 w(\{a_1, a_2, \cdots, a_k, a_{k+1} \}, \Gamma ) = k+1
 \end{align}
  \noindent and
   \begin{align} \label{eq:4}
   \{a_1, a_{i_2}, a_{i_3}, \cdots, a_{i_{k-2}}, b_1, b_2 \} \in \Gamma,
   \end{align}
   
   \noindent where  $2\leq i_2, i_3, \cdots, i_{k-2} \leq k+1.$\\

\noindent Then there exist $j_1, j_2, \cdots, j_{k-1}$, where $1 \leq j_1, j_2, \cdots, j_{k-1} \leq k+1 $, such that either 

\noindent $ \{a_{j_1}, a_{j_2}, \cdots, a_{j_{k-1}}, b_1 \} \in \Gamma$ or $\{a_{j_1}, a_{j_2}, \cdots, a_{j_{k-1}}, b_2 \} \in \Gamma$. 
\end{lemma}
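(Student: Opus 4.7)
The approach I propose is proof by contradiction via the independent sequence method (Theorem~2.4). Assume for contradiction that for every $(k-1)$-subset $\{a_{j_1},\ldots,a_{j_{k-1}}\}$ of $\{a_1,\ldots,a_{k+1}\}$ and every $\ell\in\{1,2\}$, the set $\{a_{j_1},\ldots,a_{j_{k-1}},b_\ell\}$ is not in $\Gamma$. Under this assumption, I will build a chain $\emptyset\neq B_1\subsetneq B_2\subsetneq\cdots\subsetneq B_k$ with $B_k\notin\Gamma$, together with sets $X_1,\ldots,X_k$ contained in a single $(k-1)$-element set $A\notin\Gamma$ such that $B_i\cup X_i\in\Gamma$ and $B_{i-1}\cup X_i\notin\Gamma$ for every $i$. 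Once such data is exhibited, Theorem~2.4 forces $\rho^*(\Gamma)\le|A|/k=(k-1)/k$, contradicting the hypothesis $\rho^*(\Gamma)>(k-1)/k$ and yielding the conclusion.

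Concretely, I plan to take $B_i=\{b_2,a_3,a_4,\ldots,a_{i+1}\}$ for $i=1,\ldots,k$ (with $B_1=\{b_2\}$), so that $B_k=\{b_2\}\cup\{a_3,\ldots,a_{k+1}\}$ consists of the $(k-1)$-subset $\{a_3,\ldots,a_{k+1}\}$ of the $a_j$'s together with $b_2$; by the contradiction assumption $B_k\notin\Gamma$. For the auxiliary set, take $A=\{a_1,b_1\}\cup T$, where $T$ is a suitably chosen $(k-3)$-subset of $\{a_2,\ldots,a_{k+1}\}$; then $|A|=k-1<k$, so $A\notin\Gamma$ by $k$-homogeneity. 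The $X_i$'s are then organized in two phases: in the first phase (small $i$) I let $X_i$ contain $a_1,b_1$ together with a shrinking portion of $T$, so that $B_i\cup X_i$ contains a set of the form $\{a_1,T',b_1,b_2\}$ with $|T'|=k-3$, which lies in $\Gamma$ by condition~(\ref{eq:4}); in the second phase (large $i$) I let $X_i$ drop $b_1$ and carry $a_1$ together with elements of $T$ absent from $B_i$, so that $B_i\cup X_i$ contains $k$ distinct members of $\{a_1,\ldots,a_{k+1}\}$ and hence lies in $\Gamma$ by condition~(\ref{eq:3}). The complementary checks that $B_{i-1}\cup X_i\notin\Gamma$ will then reduce in each case either to the cardinality bound $|B_{i-1}\cup X_i|<k$ or to the observation that $B_{i-1}\cup X_i$ consists of at most $k-1$ of the $a_j$'s together with $b_2$, which is excluded by the contradiction assumption.

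The main obstacle I anticipate is the tight coordination required at the transition index between the two phases: at the switch, $X_i$ must still supply enough of $T$ and enough of $\{a_1,b_1\}$ to place $B_i\cup X_i$ in $\Gamma$ via either~(\ref{eq:3}) or~(\ref{eq:4}), while $B_{i-1}\cup X_i$ must simultaneously fall below the cardinality threshold or miss one of the structural prerequisites for (\ref{eq:3}) and (\ref{eq:4}). Because $|A|=k-1$ is the smallest cardinality consistent with the method, there is essentially no slack, and the choice of $T$ (for instance, selecting it to avoid $\{a_3,\ldots,a_{k+1}\}$ as far as possible) together with the precise specification of $X_i$ at each index has to be carried out with some case analysis; the boundary index at which $b_1$ is dropped from $X_i$ and replaced by an additional element of $T$ is where the argument is most delicate. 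Once this construction has been verified, Theorem~2.4 supplies the desired contradiction.
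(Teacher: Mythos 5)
Your plan is sound: the paper's own proof also argues by contradiction via the independent sequence method with a chain of length $k$ and an auxiliary set $A$ of size $k-1$, and it also splits the $X_i$'s into a block certified by condition~(\ref{eq:4}) and a block certified by condition~(\ref{eq:3}). The concrete assignments differ, though. The paper anchors the chain at $a_1$ and inserts $b_1$ near the top, taking $B_1=\{a_1\},\ldots,B_{k-2}=\{a_1,a_4,\ldots,a_k\},\ B_{k-1}=B_{k-2}\cup\{b_1\},\ B_k=B_{k-1}\cup\{a_{k+1}\}$, with $A=\{a_2,a_3,\ldots,a_{k-2},b_1,b_2\}$ carrying \emph{both} $b$'s; you anchor the chain at $b_2$ and keep $a_1,b_1$ in $A$. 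Both are valid instantiations of the same idea, and yours is perhaps slightly cleaner in that the chain consists entirely of forbidden sets from the very start.

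Two small points if you carry it out. First, taking $T$ to ``avoid $\{a_3,\ldots,a_{k+1}\}$ as far as possible'' isn't really achievable since $T$ has $k-3$ elements and only $a_2$ lies outside that set; what actually matters is that $T$ should avoid the \emph{last} indices $a_{k-1},a_k,a_{k+1}$, which are the elements added near the top of the chain, so $T=\{a_2,a_3,\ldots,a_{k-2}\}$ is the right choice. Second, your description of phase two (``carry $a_1$ together with elements of $T$ absent from $B_i$'') is slightly off at the final index: with $T$ as above, $T\setminus B_k=\{a_2\}$, but you must take $X_k=\{a_1\}$ and \emph{not} include $a_2$, otherwise $B_{k-1}\cup X_k$ would contain all of $a_1,\ldots,a_k$ and hence lie in $\Gamma$, breaking independence. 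With $T=\{a_2,\ldots,a_{k-2}\}$ and the explicit choices $X_i=\{a_1,b_1\}\cup(T\setminus\{a_3,\ldots,a_{i+1}\})$ for $1\le i\le k-2$, $X_{k-1}=\{a_1,a_2\}$, $X_k=\{a_1\}$, every verification goes through and the method delivers $\rho^*(\Gamma)\le(k-1)/k$ as required.
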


 \begin{proof}
 \begin{sloppypar} Let us assume otherwise, that is, 
 \begin {align} \label{eq:5}
\{a_{j_1},\cdots, a_{j_{k-1}},b_1\} \notin \Gamma
\ \ {\text{and}}  \ \ 
\{a_{j_1},\cdots, a_{j_{k-1}},b_2\} \notin \Gamma \ \ {\text{for\ all}} \ \  1 \leq j_1,\cdots,j_{k-1} \leq k+1.
\end{align}
 We can consider 
 the following subsets of participants $\mathcal{P}$: 
  $B_1 = \{a_1\}$,\ $B_2=\{a_1,a_4\}$,\ $ B_3=\{a_1,a_4,a_5\}$, \ $\cdots$, \ $B_{k-2}=\{a_1,a_4,\cdots,a_k\}$,\ $ B_{k-1}=\{a_1,a_4,\cdots,a_k,b_1\},$ and $B_k=\{a_1,a_4,\cdots,a_k,a_{k+1},b_1\}$. Note that  $ \emptyset \neq B_1 \subset B_2 \subset \cdots \subset B_k =\{a_1,a_4,a_5\cdots,a_k, a_{k+1},b_1\}$. Let us consider a subset $A=\{a_2, a_3, a_4, \cdots, a_{k-2}, b_1, b_2 \} \subseteq \mathcal{P}$ consisting of $k-1$ participants. Note that $A \not \in \Gamma$. We can now consider the subsets of $A$ as follows:  $X_1=\{a_2,a_3,a_4,\cdots,a_{k-2},b_1,b_2\},\:X_2=\{a_2,a_3,a_5,\cdots,a_{k-2},b_1,b_2\},\:X_3=\{a_2,a_3,a_6,\cdots,a_{k-2},b_1,b_2\},\cdots,X_{k-3}=\{a_2,b_1,b_2\},\:X_{k-2}=\{b_1,b_2\},\:X_{k-1}=\{b_2\},\:X_k=\{a_3\}$.  \\

\noindent By using the condition (\ref{eq:4}), we can derive that 
  $B_1 \cup X_1 = \{a_1,a_2,a_3,\cdots,a_{k-2},b_1,b_2\}\in\Gamma$, $B_2 \cup X_2=\{a_1,a_2,a_3, \cdots,a_{k-2},b_1,b_2\}\in \Gamma,$ $\cdots,$ $B_{k-2} \cup X_{k-2}=\{a_1,a_4,a_5, \cdots,a_{k},b_1,b_2\}\in \Gamma.$ Moreover, since the set of $k-1$ participants can not be in  $\Gamma$, we also observe that
  $ B_0 \cup X_1=\{a_2,a_3,\cdots,a_{k-2},b_1,b_2\} \notin \Gamma$, \ $ B_1 \cup X_2 = \{a_1,a_2,a_3,a_5,\cdots,a_{k-2},b_1,b_2\} \notin \Gamma$,$\cdots$, \ $ B_{k-2} \cup X_{k-1} = \{a_1,a_4,\cdots,a_k,b_2\} \notin \Gamma$. Furthermore, using the assumption (\ref{eq:5}), we obtain 
  $B_{k-1} \cup X_k = \{a_1,a_3,\cdots,a_k,b_1\} \notin \Gamma$.
  Additionally, using the condition (\ref{eq:4}), we have
   $B_{k-1} \cup X_{k-1} = \{a_1,a_4, a_5, \cdots,a_k,b_1,b_2\}\in \Gamma$. By using  (\ref{eq:3}), we derive that 
  $\{a_1, a_3, a_4, \cdots, a_k, a_{k+1}\} \in \Gamma$, which implies that $B_k \cup X_k = \{a_1,a_3,a_4,\cdots, a_k, a_{k+1},b_1\} \in \Gamma$. \\

\noindent Then the sequence $\phi \neq B_1 \subseteq B_2 \subseteq \cdots \subseteq B_k \notin \Gamma$ is made independent by the set $A=\{a_2,a_3,a_4,\cdots,a_{k-2},b_1,b_2\}$, which is also not in $\Gamma$. Therefore, using the independent sequence method, we can conclude that $\rho^*(\Gamma) \leq \frac{|A|}{k}=\frac{k-1}{k}$.  This leads to  a contradiction. Hence, we conclude that   
there exist $j_1, j_2, \cdots, j_{k-1}$, where $1 \leq j_1, j_2, \cdots, j_{k-1} \leq k+1 $, such that either  
  \begin {align*} 
\{a_{j_1},\cdots, a_{j_{k-1}},b_1\} \in \Gamma
\ \ {\text{or}}  \ \ 
\{a_{j_1},\cdots, a_{j_{k-1}},b_2\} \in \Gamma.
\end{align*}

\noindent  This completes the proof of Lemma~\ref{lemma2}.  

 \end{sloppypar}
 \end{proof}

  \begin{lemma}\label{lemma3}
 Assume that $\Gamma$ is  a $k$-homogeneous access structure on a set of participants $\mathcal{P}$ such that
the number of minimal qualified subsets contained in any set of $k+1$ participants is not equal to $k$, and $\rho^* (\Gamma) > \frac{k-1}{k}$.
Let us consider $k+2$ different participants $a_1, a_2, \cdots, a_k, a_{k+1}, b \in \mathcal{P}$ satisfying the following three conditions:

\begin{align}\label{eq:8}
w(\{a_1, a_2, \cdots, a_k, a_{k+1} \}, \Gamma ) = k+1,
\end{align}
\noindent and
\begin{align}\label{eq:9}
 \{a_1, a_2, a_{i_1}, a_{i_2},  \cdots, a_{i_{k-3}}, b \} \in \Gamma\:\:\text{where}\:\:3\leq i_1,i_2,  \cdots, i_{k-3} \leq k+1,
\end{align}
\noindent and
\begin{align}\label{eq:10}
 \{a_{t_1}, a_{t_2},  \cdots, a_{t_{k-3}}, a_k, a_{k+1}, b \} \in \Gamma \:\:\text{where}\:\: 1 \leq t_1, t_2,  \cdots, t_{k-3} \leq k-1.
\end{align}

\noindent Then, the induced access structure $\Gamma(\{a_1, a_2, \cdots, a_k, a_{k+1}, b \})$ is the
$(k,k+2)$-threshold access structure.

\end{lemma}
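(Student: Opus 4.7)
The goal is to show that every $k$-subset of $\{a_1,\ldots,a_{k+1},b\}$ belongs to $\Gamma$. My plan has three stages: reduce to a short list of unknown $k$-subsets, link their statuses using the hypothesis $k\notin\Omega(k+1,\Gamma)$, and then rule out the bad case with the independent sequence method.

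For the reduction, note that condition (8) supplies every $k$-subset of $\{a_1,\ldots,a_{k+1}\}$. A $k$-subset that contains $b$ omits a pair $\{i,j\}$ of $a$-indices; condition (9) handles pairs with $3\le i<j\le k+1$, condition (10) handles pairs with $1\le i<j\le k-1$, and the only uncovered pairs are the four ``cross'' pairs with $i\in\{1,2\}$ and $j\in\{k,k+1\}$, giving four unknown $k$-subsets which I denote $U_1,U_2,U_3,U_4$. For the linking step, I would count $w(Q_j,\Gamma)$ in each of the four $(k+1)$-subsets $Q_j=\{a_1,\ldots,a_{k+1},b\}\setminus\{a_j\}$ for $j\in\{1,2,k,k+1\}$; sorting the $k$-subsets of $Q_j$ by which of (8), (9), (10) they satisfy gives $w(Q_j,\Gamma)=(k-1)+\mathbbm{1}[U]+\mathbbm{1}[U']$ for a specific pair $(U,U')$ of the four unknowns. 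Since the hypothesis $w(Q_j,\Gamma)\ne k$ forces $\mathbbm{1}[U]+\mathbbm{1}[U']\ne 1$, each pair shares qualified status; chaining over the four choices of $j$ shows that $U_1,\ldots,U_4$ are either all in $\Gamma$ or all not in $\Gamma$.

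For the independent sequence stage, assume for contradiction that $U_1,\ldots,U_4\notin\Gamma$ and build
\[
B_1=\{b\},\ \ B_2=\{b,a_{k+1}\},\ \ B_i=\{b,a_{k+1},a_3,\ldots,a_i\}\ \text{for}\ 3\le i\le k-1,\ \ B_k=U_1,
\]
together with the witness set $A=\{a_1,a_3,a_4,\ldots,a_{k-2},a_k,a_{k+1}\}$ of cardinality $k-1$. The subsets $X_i\subseteq A$ are selected so that each $B_i\cup X_i$ matches a qualified set supplied by (8)--(10), while each $B_{i-1}\cup X_i$ has cardinality $k-1$ and so is automatically non-qualified. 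The decisive choice is $X_k=\{a_1\}$: then $B_k\cup X_k$ contains the (9)-set $\{a_1,a_2,\ldots,a_{k-1},b\}$ and is qualified, while $B_{k-1}\cup X_k=U_3$ fails to be in $\Gamma$ by the standing assumption. The independent sequence method then yields $\rho^*(\Gamma)\le (k-1)/k$, contradicting the hypothesis $\rho^*(\Gamma)>(k-1)/k$.

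The hardest part is the independent sequence construction: keeping $|A|=k-1$ while simultaneously arranging that every $X_i$ lies in $A$, that $B_i\cup X_i$ hits one of the known qualified sets from (8)--(10), and that $B_{k-1}\cup X_k$ equals $U_3$. The key trick is to set $X_1=A$ so that $B_1\cup X_1=\{a_1,a_3,\ldots,a_{k-2},a_k,a_{k+1},b\}$ is the (10)-set obtained by taking $t_1,\ldots,t_{k-3}=1,3,4,\ldots,k-2$, rather than a (9)-set: this is what keeps $a_2$ out of $A$ and makes $|A|=k-1$ instead of $k$. Once the contradiction is obtained, all four unknowns lie in $\Gamma$, so every $k$-subset of $\{a_1,\ldots,a_{k+1},b\}$ is qualified and $\Gamma(\{a_1,\ldots,a_{k+1},b\})$ is the $(k,k+2)$-threshold access structure.
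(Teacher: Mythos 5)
Your proposal is correct for $k\ge 4$ and takes a genuinely different organizational route from the paper, even though both rest on the same two tools (the hypothesis $k\notin\Omega(k+1,\Gamma)$ and the independent sequence method).

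The decomposition into the four ``cross'' pairs $\{i,j\}$ with $i\in\{1,2\}$, $j\in\{k,k+1\}$ is the right one, and the counting $w(Q_j,\Gamma)=(k-1)+\mathbbm{1}[U]+\mathbbm{1}[U']$ for $Q_j=\{a_1,\dots,a_{k+1},b\}\setminus\{a_j\}$ is correct (one subset from~(8), $k-2$ from~(10) or~(9) depending on $j$, and the two cross-unknowns), so the chaining via $w(Q_j,\Gamma)\ne k$ over $j\in\{1,2,k,k+1\}$ really does force all four unknowns to share the same status. The subsequent independent sequence with $|A|=k-1$ and $m=k$ then works: I checked that one can take $X_i=A\setminus B_{i-1}$ for $1\le i\le k-2$, $X_{k-1}=\{a_k\}$, and $X_k=\{a_1\}$, and with these choices each $B_i\cup X_i$ is (a superset of) a set supplied by~(8)--(10), each $B_{i-1}\cup X_i$ for $i<k$ has exactly $k-1$ elements, $B_{k-1}\cup X_k$ is the cross-unknown omitting $\{a_2,a_k\}$, and $B_k$ is the cross-unknown omitting $\{a_1,a_k\}$, so the independent sequence method yields $\rho^*(\Gamma)\le (k-1)/k$ as claimed.

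The paper organizes the argument differently: it proves two separate claims, each showing $w(Q,\Gamma)=k+1$ for a family of $(k+1)$-subsets $Q$ containing $b$ (those containing $a_1,a_2$ in Claim~1, those containing $a_3,\dots,a_{k+1}$ in Claim~2), and each claim runs its own independent-sequence argument to show at least one of the two ``new'' $k$-subsets of $Q$ is qualified, followed by a single application of $k\notin\Omega(k+1,\Gamma)$ to get both. Your version replaces the two independent-sequence constructions with one, compensating with the global chaining argument (four applications of $k\notin\Omega(k+1,\Gamma)$ in a row). This is more economical in the harder part (the independent-sequence bookkeeping) and, I think, more transparent, since it makes explicit that only four $k$-subsets are genuinely in doubt.

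Two small caveats. First, your witness set $A=\{a_1,a_3,\dots,a_{k-2},a_k,a_{k+1}\}$ has cardinality $k-1$ only for $k\ge 4$; at $k=3$ it has $3$ elements, so the stated bound degenerates. The paper's own witness set in its first claim has the same defect at $k=3$ (and is moreover a qualified $3$-subset of $\{a_1,\dots,a_4\}$ there), so both arguments implicitly assume $k\ge 4$; for $k=3$ a different independent sequence is needed, for instance $B_1=\{a_1\}$, $B_2=\{a_1,b\}$, $B_3=\{a_1,a_3,b\}$ with $A=\{a_2,a_4\}$ and $X_1=\{a_2,a_4\}$, $X_2=\{a_2\}$, $X_3=\{a_4\}$. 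Second, your write-up leaves the $X_i$ for $1\le i\le k-1$ unspecified beyond the constraints they must satisfy; they do exist (as above), but a written-out proof should display them.
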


 \begin{proof}
 
 \begin{sloppypar}
 
 To prove this, we need to show that $w(\{ a_{p_1}, a_{p_2}, a_{p_3}, \cdots, a_{p_k}, b \}, \Gamma) = k+1$, where $ 1 \leq p_1, p_2, \cdots, p_k \leq k+1 $. We will prove it  by  showing the following two claims.

 \begin{claim*}\label{claim41}
$w(\{a_1,a_2,a_{q_1},a_{q_2},\cdots,a_{q_{k-2}},b \}, \Gamma) = k+1$, where $3\leq q_1,q_2,  \cdots, q_{k-2} \leq k+1$.
\end{claim*}
 
 \begin{proof}[Proof of Claim~\ref{claim41}]
 \noindent Using the condition (\ref{eq:8}), we obtain 
 \begin{align}\label{eq:11}
  \{ a_1,a_2,a_{q_1},a_{q_2},\cdots,a_{q_{k-2}}\} \in \Gamma,
  \end{align} where  $3\leq q_1,q_2,  \cdots, q_{k-2} \leq k+1$.\\
  
 \noindent Moreover, using the condition (\ref{eq:9}),  we also obtain
 \begin{align}\label{eq:12}
 \{a_1, a_2, a_{i_1}, a_{i_2},  \cdots, a_{i_{k-3}}, b \} \in \Gamma
 \end{align}
 
 \noindent where $ i_1,i_2,  \cdots, i_{k-3} \in \{q_1,q_2,\cdots, q_{k-2}\}$ with $3\leq q_1,q_2,  \cdots, q_{k-2} \leq k+1$. \\

 Thus, we need to show that $\{a_2,a_{q_1},a_{q_2}, \cdots,a_{q_{k-2}}, b\} \in \Gamma$ or 
 $\{ a_1,a_{q_1},a_{q_2}, \cdots,a_{q_{k-2}}, b\} \in \Gamma$, where  $3\leq q_1,q_2,  \cdots, q_{k-2} \leq k+1$.
 Suppose, for the sake of contradiction, that  $\{a_2,a_{q_1},a_{q_2}, \cdots,a_{q_{k-2}}, b\} \not \in \Gamma$ and  $\{ a_1,a_{q_1},a_{q_2},\cdots,a_{q_{k-2}}, b \} \not \in \Gamma$, where  $3\leq q_1,q_2,  \cdots, q_{k-2} \leq k+1$. Now we consider  the subsets of participants $\mathcal{P}$ as follows: $B_1 = \{a_{q_{k-2}}\},\:B_2=\{a_{q_{k-2}}, b \}$,  $\:B_3=\{a_{q_{k-3}},a_{q_{k-2}}, b \},\cdots,\:B_{k-1}=\{a_{q_1},a_{q_2},\cdots,a_{q_{k-2}}, b \},$ and $\:B_k=\{a_2,a_{q_1},a_{q_2}, a_{q_3}, \cdots,a_{q_{k-2}},b\}$. 
From the given assumption, we have  a sequence of sets $ \emptyset \neq B_1 \subset B_2 \subset \cdots \subset 
\:B_k=\{a_2,a_{q_1},a_{q_2},  \cdots,a_{q_{k-2}},b\}
\not \in \Gamma$. Let us choose a participant, denoted by $a_{q_{k-1}}$,  from the set of  $k+1$ participants $a_1,a_2, \cdots, a_{k+1}$ who is not in the set  $\{a_1, a_2, a_{q_1}, a_{q_2}, \cdots, a_{q_{k-2}}\}$. Next, we consider
a subset $A=\{a_1,a_2, a_{q_2},  \cdots, a_{q_{k-3}}, a_{q_{k-1}} \} \subseteq \mathcal{P}$ consisting of $k-1$ participants. Note that $A \not \in \Gamma$. Now, we consider the subsets of $A$ as follows:  $X_1=\{a_1,a_2, a_{q_2}, a_{q_3}, \cdots, a_{q_{k-3}}, a_{q_{k-1}}\},\:X_2=\{a_1,a_2, a_{q_2}, a_{q_3}, \cdots, a_{q_{k-4}}, a_{q_{k-1}}\},\cdots,X_{k-3}= \{a_1,a_2, a_{q_{k-1}}\}, X_{k-2}=\{a_1,a_2\}$, $\:X_{k-1}=\{a_{q_{k-1}}\}$, and $\:X_k=\{a_1\}$.  \\

Using condition (\ref{eq:8}),   we observe 
 $B_1 \cup X_1 = \{a_1,a_2, a_{q_{2}}\cdots,a_{q_{k-3}},a_{q_{k-2}},a_{q_{k-1}}\}\in\Gamma$. Additionally, using condition (\ref{eq:9}), we obtain that $\ B_2 \cup X_2$ $ = \{a_1,a_2, a_{q_{2}}\cdots,a_{q_{k-4}},a_{q_{k-2}},a_{q_{k-1}}, b \}\in\Gamma$, $\cdots$,
$\ B_{k-2} \cup X_{k-2} = \{a_1,a_2, a_{q_{2}}\cdots,a_{q_{k-3}},a_{q_{k-2}}, b \}\in\Gamma$. Also, we have 
$\ B_{k-1} \cup X_{k-1} =$ $ \{a_{q_1}, a_{q_{2}}\cdots,a_{q_{k-2}},a_{q_{k-1}}, b \} =  \{a_3,a_4,a_5, \cdots,a_k,a_{k+1},b\} \in\Gamma$. 
By using the condition (\ref{eq:9}), we also deduce that $\{a_1,a_2, a_{r_1}, a_{r_{2}}\cdots,a_{r_{k-3}}, b \}  \in \Gamma$, where $a_{r_1},a_{r_2},\cdots, a_{r_{k-3}} \in \{a_{q_1}, a_{q_{2}}\cdots,a_{q_{k-2}}\}$, which implies that
$\ B_{k} \cup X_{k} = \{a_1,a_2, a_{q_1}, a_{q_{2}}\cdots,a_{q_{k-2}}, b \}  \in\Gamma$. \\

Since the set of $k-1$ participants can not be in $\Gamma$, we derive that
$B_0\cup X_1=\{a_1,a_2,a_{q_2},a_{q_3},\cdots, a_{q_{k-3}},a_{q_{k-1}}\} \notin \Gamma,$ $ \  B_1 \cup X_2 = \{a_1,a_2,a_{q_2},\cdots,a_{q_{k-4}},a_{q_{k-2}},a_{q_{k-1}}\} \notin \Gamma,$ $\cdots,  B_{k-2} \cup X_{k-1} = \{a_{q_2},\cdots,a_{q_{k-2}},a_{q_{k-1}}, b\} \notin \Gamma$.
 From the given assumption, we get $B_{k-1} \cup X_{k} = \{a_1,a_{q_2},\cdots,a_{q_{k-2}}, b\} \notin \Gamma$.\\

 Then the sequence $\phi \neq B_1 \subseteq B_2 \subseteq \cdots \subseteq B_k \notin \Gamma$ is made independent by the set $A=\{a_1,a_2, a_{q_2},  \cdots, a_{q_{k-3}}, a_{q_{k-1}} \}$. Therefore, using the independent sequence method, we can obtain that $\rho^*(\Gamma) \leq \frac{|A|}{k}=\frac{k-1}{k}$, which leads to  a contradiction. Hence, we conclude that 
 $\{a_2,a_{q_1},a_{q_2}, \cdots,a_{q_{k-2}}, b\} \in \Gamma$ or $\{ a_1,a_{q_1},a_{q_2}, \cdots,a_{q_{k-2}}, b\} \in \Gamma$, where  $3\leq q_1,q_2,  \cdots, q_{k-2} \leq k+1$. 
By using the condition $k\not \in \Omega(k+1, \Gamma)$, we derive that
 $\{a_2,a_{q_1},a_{q_2},\cdots,a_{q_{k-2}}, b\} \in \Gamma$ and $\{ a_1,a_{q_1},a_{q_2}, \cdots,a_{q_{k-2}}, b\} \in \Gamma$, where  $3\leq q_1,q_2,  \cdots, q_{k-2} \leq k+1$. Therefore, we conclude that
 $w(\{a_1,a_2,a_{q_1},a_{q_2},\cdots,a_{q_{k-2}},b \}, \Gamma) = k+1$, where $3\leq q_1,q_2,  \cdots, q_{k-2} \leq k+1$.\\ 
\end{proof}

 \begin{claim*}\label{claim42}
$w(\{a_{u_1},a_{3},a_{4}, a_{5},\cdots,a_{k},a_{k+1},b \}, \Gamma) = k+1$, where $u_1 = 1$ or $2$.
\end{claim*}
 \begin{proof}[Proof of Claim~\ref{claim42}] 
 \noindent By using the condition (\ref{eq:10}),  we derive 
 \begin{align}\label{eq:12}
 \{a_{j_1}, a_{j_2}, \cdots,a_{j_{k-3}}, a_{k}, a_{k+1}, b \} \in \Gamma
 \end{align}
 
 \noindent where $ j_1,i_2,  \cdots, j_{k-3} \in \{u_1,3,4,5,\cdots,k-1\}$ with $u_1=1$ or $2$.\\

Thus, we need to prove that $\{a_{u_1},a_{3},\cdots,a_{k-1},a_{k+1}, b\} $ $\in \Gamma$ or $\{a_{u_1},a_{3},\cdots,a_{k-1},a_{k}, b\} \in \Gamma$, where  $u_1 = 1$ or $2$.
Suppose, for the sake of contradiction, that $\{a_{u_1},a_{3},a_{4},\cdots,a_{k-1},a_{k+1}, b\} \not \in \Gamma$ and $\{a_{u_1},a_{3},a_{4},\cdots,a_{k-1},a_{k}, b\} \not \in \Gamma$, where  $u_1=1 {\text{\ or }} 2$.
Let us consider  the following subsets of participants $\mathcal{P}$: $B_1 = \{a_{u_1}\},\:B_2=\{a_{u_1}, b \}$,  $\:B_3=\{a_{u_1},a_{3}, b \},\cdots, B_{k-2}=\{a_{u_1},a_{3},\cdots,a_{k-2}, b \},\:B_{k-1}=\{a_{u_1},a_{3},\cdots,a_{k-1}, b \},$ and $\:B_k=\{a_{u_1},a_{3}, \cdots,a_{k-1},a_{k+1},b\}$, where  $u_1=1 {\text{\ or }} 2$.
From the given assumption, we have  $ \emptyset \neq B_1 \subset B_2 \subset \cdots \subset 
\:B_k= \{a_{u_1},a_{3}, \cdots,a_{k-1},a_{k+1},b\}
\not \in \Gamma$,  where  $u_1=1 {\text{\ or }} 2$.\\

 Let us choose a participant, denoted by $u_2$, from the set of  two participants $a_1,a_2$ who is not equal to $u_1$. Next, we consider
a subset $A=\{a_{u_2},a_4, a_{5},  \cdots, a_{k}, a_{k+1} \} \subseteq \mathcal{P}$ consisting of $k-1$ participants. Note that $A \not \in \Gamma$. We can now consider the subsets of $A$ as follows:  $X_1=\{a_{u_2},a_4, a_{5},\cdots, a_{k}, a_{k+1}\},\:X_2=\{a_{u_2},a_5, \cdots, a_{k}, a_{k+1}\},\cdots,X_{k-3}= \{a_{u_2},a_k,a_{k+1}\}, X_{k-2}=\{a_k,a_{k+1}\}$, $\:X_{k-1}=\{a_{u_2}\}$, and $\:X_k=\{a_k\}$.  
By using the condition (\ref{eq:8}),   we observe that $B_1 \cup X_1 = \{a_{u_1},a_{u_2},a_4, a_5, \cdots,a_k, a_{k+1}\}= \{a_{1},a_{2},a_4, a_5, \cdots,a_k, a_{k+1}\}\in\Gamma$. Moreover, using the condition  (\ref{eq:9}) and (\ref{eq:10}), we also derive that $\ B_2 \cup X_2 =
\{a_{u_1},a_{u_2},a_5,  \cdots, a_{k}, a_{k+1}, b\} \in\Gamma$, $\cdots$,
  $\ B_{k-2} \cup X_{k-2} =\{a_{u_1},a_{3},a_{4},\cdots,a_{k-2},a_k,a_{k+1}, b\} \in \Gamma$, and
$\ B_{k-1} \cup X_{k-1} = \{a_{u_1}, a_{u_{2}}, a_3,\cdots,a_{k-1}, b \} =  
\{a_{1}, a_{2}, a_3,\cdots,a_{k-1}, b \}
 \in\Gamma$. 
 Additionally, using (\ref{eq:10}), we can derive that $\{a_{u_1},a_{r_1}, a_{r_{2}}\cdots,a_{r_{k-4}},a_k,a_{k+1}, b \}  \in \Gamma$, where $a_{r_1},a_{r_2},\cdots, a_{r_{k-4}} \in \{a_{3}, a_{4}\cdots,a_{k-1}\}$, which implies that $\ B_{k} \cup X_{k} = \{a_{u_1},a_3, a_{4}, \cdots,a_{k-1}, a_k, a_{k+1}, b \}  \in\Gamma$. Since the set of $k-1$ participants can not be in $\Gamma$, we deduce  that
$B_0\cup X_1=\{a_{u_2},a_4,a_5,\cdots, a_k,a_{k+1}\} \notin \Gamma, \  B_1 \cup X_2 = \{a_{u_1},a_{u_2},a_5,\cdots,a_{k},a_{k+1}\} = \{a_{1},a_{2},a_5,\cdots,a_{k},a_{k+1}\} \notin \Gamma,\cdots,  B_{k-2} \cup X_{k-1} = \{a_{u_1},a_{u_2},a_3,\cdots,a_{k-2}, b\} 
= \{a_{1},a_{2},a_3,\cdots,a_{k-2}, b\}\notin \Gamma$.
From the given assumption, we have $B_{k-1} \cup X_{k} = \{a_{u_1},a_{3},\cdots,a_{k-1}, a_k, b\} \notin \Gamma$.\\

 Then the sequence $\phi \neq B_1 \subseteq B_2 \subseteq \cdots \subseteq B_k \notin \Gamma$ is made independent by the set $A= \{a_{u_2},a_4, a_{5},  \cdots, a_{k}, a_{k+1} \}$. Hence, using the independent sequence method,  we can conclude that $\rho^*(\Gamma) \leq \frac{|A|}{k}=\frac{k-1}{k}$, which leads to   a contradiction. Therefore, we conclude that 
 $\{a_{u_1},a_{3},a_{4},\cdots,a_{k-1},a_{k+1}, b\} \in \Gamma$ or $\{a_{u_1},a_{3},a_{4},\cdots,a_{k-1},a_{k}, b\} \in \Gamma$, where  $u_1 = 1$ or $2$. 
By using the condition $k\not \in \Omega(k+1, \Gamma)$, we derive that
 $\{a_{u_1},a_{3},a_{4},\cdots,a_{k-1},a_{k+1}, b\} \in \Gamma$ and $\{a_{u_1},a_{3},a_{4},\cdots,a_{k-1},a_{k}, b\} \in \Gamma$, where  $u_1 = 1$ or $2$.
 Therefore, we conclude that
 $w(\{a_{u_1},a_3,a_{4},a_{5},\cdots,a_k, a_{k+1},b \}, \Gamma) = k+1$, where  $u_1 = 1$ or $2$. 
 \end{proof} 

\noindent Using Claim~\ref{claim41} and Claim~\ref{claim42}, we  can conclude that $$w(\{ a_{p_1}, a_{p_2}, a_{p_3}, \cdots, a_{p_k}, b \}, \Gamma) = k+1,$$
 where $ 1 \leq p_1, p_2, \cdots, p_k \leq k+1 $. \\
 
 \noindent Thus,  the induced access structure $\Gamma(\{a_1, a_2, \cdots, a_k, a_{k+1}, b \})$ is the
$(k,k+2)$-threshold access structure. This completes the proof of Lemma~\ref{lemma3}.
 \end{sloppypar}
 \end{proof}
 
 Before stating the next lemma, we need to introduce the following notation.
Let  $p$ and $q$ be two participants from the set of all participants  $\mathcal{P}$.  We say that $p$ and $q$ are  {\it{equivalent}} if either (i) $p=q$ or (ii) $p\neq q$ and the following two conditions are satisfied:
(1) $\{p, q \} \not \subset A$ if $A\in \Gamma_0$,  and (2) if $A \subset \mathcal{P} \backslash \{p,q\}$, then
$ A \cup \{p\} \in \Gamma_0$ if and only if $A \cup \{q\} \in \Gamma_0$,  where $\Gamma_0$ is the collection of minimal qualified subsets.

 \begin{lemma}\label{lemma4}
 Let $\Gamma$ be a $k$-homogeneous access structure on a set of participants $\mathcal{P}$, such that
the number of minimal qualified subsets contained in any set of $k+1$ participants is not equal to  $1$ and $k$, and $\rho^* (\Gamma) > \frac{k-1}{k}$.
Let us consider $k+2$ different participants  $a_1, a_2, \cdots, a_k, a_{k+1}, b \in \mathcal{P}$ satisfying the following conditions:
 
\begin{align}
&(i) \ \ \  w(\{a_1, a_2, \cdots, a_k, a_{k+1} \}, \Gamma ) = k+1 \label{eq:15}\\ \nonumber & \\
&(ii)\ \ \  \{a_1, a_2, \cdots, a_{k-1}, b \} \in \Gamma.\label{eq:16}\\ \nonumber & \\
&(iii) \ \ \ a_k {\text \ is \ not \ equivalent \ to \ }  a_{k+1} \label{eq:166}\\ \nonumber
\end{align}

\noindent Moreover,  $k+2$ participants $a_1, a_2, \cdots, a_k, a_{k+1}, b$ satisfy the one condition among the following  three statements.  

\begin{align}
(iv) & \ \ \  \{a_{t_1}, a_{t_2}, a_{t_3}, \cdots, a_{t_{k-3}}, a_k, a_{k+1}, b \} \not \in \Gamma \ \  \ where   \  \ 1 \leq t_1, t_2,  \cdots, t_{k-3} \leq k-1 .\label{eq:17}\\ & \nonumber \\
\noindent & or \  \{a_{l_1}, a_{l_2}, a_{l_3}, \cdots, a_{l_{k-2}}, a_{k+1}, b \} \not \in \Gamma \ \ \ where \  \  1 \leq l_1, l_2,  \cdots, l_{k-2} \leq k-1. \label{eq:18}\\ & \nonumber \\
\noindent & or  \  \{a_{s_1}, a_{s_2}, a_{s_3}, \cdots, a_{s_{k-2}}, a_{k}, b \} \not \in \Gamma \ \ \  where  \ \ 1 \leq s_1, s_2,  \cdots, s_{k-2} \leq k-1. \label{eq:19}\\ \nonumber
\end{align}

\noindent Then, either $b$ is equivalent to $a_{k+1}$ or $b$ is equivalent to $a_{k}$. 

\end{lemma}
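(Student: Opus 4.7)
I would prove Lemma~\ref{lemma4} by case analysis on which of the three alternatives (\ref{eq:17}), (\ref{eq:18}), (\ref{eq:19}) holds, using the independent sequence method (as in the proofs of Lemmas~\ref{lemma2} and~\ref{lemma3}) to constrain the $\Gamma_0$-membership of the various $k$-subsets involving $b$. Hypothesis (iii) together with the restriction $\{1,k\}\cap\Omega(k+1,\Gamma)=\emptyset$ is then used to force a definite choice between $b\sim a_k$ and $b\sim a_{k+1}$.

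The cases (\ref{eq:18}) and (\ref{eq:19}) are symmetric; I would describe (\ref{eq:18}) and leave (\ref{eq:19}) for swapping $a_k\leftrightarrow a_{k+1}$. The given failed qualification $\{a_{l_1},\dots,a_{l_{k-2}},a_{k+1},b\}\notin\Gamma$ already signals that $b$ cannot substitute for $a_k$ (since the analogous set with $a_k$ in place of $b$ lies in $\Gamma$ by (i)), which is consistent with $b\sim a_{k+1}$. First I would upgrade the existential statement to a universal one, showing $\{a_{j_1},\dots,a_{j_{k-2}},a_{k+1},b\}\notin\Gamma$ for every $(k-2)$-subset $\{j_1,\dots,j_{k-2}\}$ of $\{1,\dots,k-1\}$, by assuming some such set lies in $\Gamma$ and constructing an independent sequence $B_1\subset\dots\subset B_k$ together with $X_1,\dots,X_k$ whose union has size $k-1$ and lies outside $\Gamma$; this would give $\rho^*(\Gamma)\leq (k-1)/k$, contradicting the hypothesis. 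Next I would analyse $w(\{a_1,\dots,a_k,b\},\Gamma)$: by (i) and (ii) it is $\geq 2$, the value $k$ is excluded by hypothesis, and a further independent sequence construction rules out all values in $\{2,\dots,k-1\}$, forcing $w=k+1$ and hence $\{a_1,\dots,\widehat{a}_j,\dots,a_k,b\}\in\Gamma$ for every $j\in\{1,\dots,k\}$. Combined with the previous step, this shows that within $\Gamma(\{a_1,\dots,a_{k+1},b\})$ no minimal qualified subset contains both $a_{k+1}$ and $b$, while $A\cup\{b\}\in\Gamma$ iff $A\cup\{a_{k+1}\}\in\Gamma$ for every $(k-1)$-subset $A\subset\{a_1,\dots,a_k\}$; a final application of the independent sequence method, varying $A$ over arbitrary $(k-1)$-subsets of $\mathcal{P}\setminus\{a_{k+1},b\}$, lifts this to the global equivalence $b\sim a_{k+1}$.

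Case (\ref{eq:17}) is the delicate one, because the failed qualification $\{a_{t_1},\dots,a_{t_{k-3}},a_k,a_{k+1},b\}\notin\Gamma$ is consistent with both $b\sim a_k$ and $b\sim a_{k+1}$ and does not on its own break the symmetry. Here I would invoke hypothesis (iii) to extract a concrete witness for $a_k\not\sim a_{k+1}$, namely either a minimal qualified subset $M\in\Gamma_0$ with $\{a_k,a_{k+1}\}\subseteq M$, or a $(k-1)$-subset $A^{*}\subset\mathcal{P}\setminus\{a_k,a_{k+1}\}$ for which $A^{*}\cup\{a_k\}$ and $A^{*}\cup\{a_{k+1}\}$ disagree on $\Gamma_0$-membership. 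I would then test $b$ against this witness through an independent sequence, with the witness playing the role previously played by the constraining indices $\{a_{l_1},\dots,a_{l_{k-2}}\}$; the outcome pins $A^{*}\cup\{b\}$ to exactly one of the two possibilities, singling out whether $b\sim a_k$ or $b\sim a_{k+1}$. With the choice made, the upgrade machinery from the previous cases applies to complete the equivalence.

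The main obstacle is this last case: orchestrating the independent sequence so that the witness to $a_k\not\sim a_{k+1}$, which a priori lies outside $\{a_1,\dots,a_{k+1},b\}$, is integrated without swelling the attached set beyond size $k-1$ (which would weaken the bound below the needed $(k-1)/k$) or letting the roles of $a_k$, $a_{k+1}$, $b$ collapse. Care must also be taken that the universal upgrade of the $\notin\Gamma$ information and the forcing of $w=k+1$ on the relevant $(k+1)$-subset remain compatible with the specific witness chosen.
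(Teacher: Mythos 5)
Your outline for cases (\ref{eq:18}) and (\ref{eq:19}) tracks the paper's Case I and Case II reasonably well in spirit: you correctly recognise that the given non-qualification must first be upgraded to a universal statement over $(k-2)$-subsets of $\{a_1,\ldots,a_{k-1}\}$, and that the two conditions of the equivalence relation (no $A\in\Gamma_0$ contains $\{a_{k+1},b\}$; $A\cup\{a_{k+1}\}\in\Gamma\iff A\cup\{b\}\in\Gamma$) then have to be established globally. However, your sketch of the global lift is too thin. The paper does this via an induction (Claims 5.1.1--5.1.3 in the source, i.e.\ Claims~\ref{claim53}, \ref{claim54}, \ref{claim55}) that pushes one external participant $x$ at a time into the sets, each step invoking Lemma~\ref{lemma1} or Lemma~\ref{theorem1} to preserve the independence bound with an attached set of size $k-1$. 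Saying ``a final application of the independent sequence method, varying $A$ over arbitrary $(k-1)$-subsets of $\mathcal{P}\setminus\{a_{k+1},b\}$'' glosses over exactly the difficulty that makes the iteration necessary: a single step cannot reach sets with several participants outside $\{a_1,\ldots,a_{k+1},b\}$.

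The more serious gap is your treatment of case (\ref{eq:17}). You declare it delicate, propose to extract an external witness for $a_k\not\sim a_{k+1}$ (a qualified $M\supseteq\{a_k,a_{k+1}\}$ or a distinguishing $A^*$), and then flag, accurately, that you don't see how to run an independent sequence through such a witness without overshooting the size bound. The paper avoids this entirely: it never leaves the $k+2$ given participants in this step. From (\ref{eq:15}) it has $\{a_{\Delta_1},\dots,a_{\Delta_{k-2}},a_k,a_{k+1}\}\in\Gamma$, from (\ref{eq:17}) all sets $\{a_{\beta_1},\dots,a_{\beta_{k-3}},a_k,a_{k+1},b\}\notin\Gamma$, and then applies the hypotheses $1\notin\Omega(k+1,\Gamma)$ and $a_k\not\sim a_{k+1}$ to the $(k+1)$-set $\{a_{\Delta_1},\dots,a_{\Delta_{k-2}},a_k,a_{k+1},b\}$ to force one of $\{a_{\Delta},a_{k+1},b\}\notin\Gamma$ or $\{a_{\Delta},a_{k},b\}\notin\Gamma$, which puts you into Case I or Case II with no external participant at all. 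So (\ref{eq:17}) is not a genuinely separate case; it collapses to (\ref{eq:18}) or (\ref{eq:19}) after this short counting argument. Your proposed witness-based route is unnecessary, and in its current form it is not a proof because the obstacle you name remains unresolved.

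A further structural remark: the paper uses hypothesis (\ref{eq:166}) not as a source of a far-away witness but directly and locally, in the form ``if $\{a_{l_1},\dots,a_{l_{k-2}},a_{k+1},b\}\notin\Gamma$ then $\{a_{l_1},\dots,a_{l_{k-2}},a_{k},b\}\in\Gamma$'' (and symmetrically). That usage is what ties together both equivalence conditions inside Claims~\ref{claim53}--\ref{claim55}. If you do try to salvage your own route, you should note that, because of (\ref{eq:15}), every $k$-subset of $\{a_1,\dots,a_{k+1}\}$ containing both $a_k$ and $a_{k+1}$ already lies in $\Gamma_0$, so a witness of type ``$M\in\Gamma_0$ with $\{a_k,a_{k+1}\}\subseteq M$'' is always available within the given participants; the delicate alternative witness $A^*$ outside $\{a_1,\dots,a_{k+1},b\}$ is therefore not something you are ever forced to chase.
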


 \begin{proof} 
First, let us consider $k+2$ participants in $\mathcal{P}$, denoted as $a_1, a_2, \cdots, a_k, a_{k+1}$, and $ b$, who satisfy conditions (\ref{eq:15}), (\ref{eq:16}), (\ref{eq:166}), and (\ref{eq:17}). Now, let us focus on  $k+1$ participants
 $a_{\Delta_1}, a_{\Delta_2}, \cdots a_{\Delta_{k-2}}, a_k, a_{k+1}, b $, where $ 1\leq \Delta_1, \cdots, \Delta_{k-2} \leq k-1$. Using (\ref{eq:15}), we obtain
 \begin{align}\label{eq:20}
 \{a_{\Delta_1}, a_{\Delta_2}, \cdots a_{\Delta_{k-2}}, a_k, a_{k+1}\} \in \Gamma.
 \end{align}


 \noindent Moreover, by (\ref{eq:17}), we derive that
 \begin{align}\label{eq:21}
 \{ a_{\beta_1}, a_{\beta_2}, \cdots a_{\beta_{k-3}}, a_k, a_{k+1}, b \} \not \in \Gamma,
 \end{align}

 \noindent where, $ a_{\beta_1}, a_{\beta_2}, \cdots a_{\beta_{k-3}} \in \{ a_{\Delta_1}, a_{\Delta_2}, \cdots a_{\Delta_{k-2}}\}.$\\
  
 \noindent By using the conditions  $1 \not \in \Omega(k+1, \Gamma)$ and $a_k$ is not equivalent to $a_{k+1}$, we obtain that
 
 \begin{align}\label{eq:23}
& \{ a_{\Delta_1}, a_{\Delta_2},  \cdots a_{\Delta_{k-2}}, a_{k+1}, b \} \not \in \Gamma 
\  \   \   \    \ or    & \{ a_{\Delta_1}, a_{\Delta_2},  \cdots a_{\Delta_{k-2}}, a_{k}, b \} \not \in \Gamma,\end{align}
where $ 1\leq \Delta_1, \Delta_2 \cdots, \Delta_{k-2} \leq k-1$.\\
 
\noindent {\bf{ Case I: $ \{ a_{\Delta_1}, a_{\Delta_2}, \cdots a_{\Delta_{k-2}}, a_{k+1}, b \} \not \in \Gamma$, where $  1 \leq \Delta_1, \Delta_2, \cdots, \Delta_{k-2} \leq k-1.$}} \\

\noindent In this case, we  aim to demonstrate the equivalence of $a_{k+1}$  and $b$. Given that  $a_{k+1}$ and $b$ represent distinct participants, it is necessary to establish that:

\begin{itemize}
\item[(1)] $\{a_{k+1}, b\} \not \subset A$ for any $A \in \Gamma_0$,
\item[(2)] if $A \subset \mathcal{P}\backslash \{ a_{k+1}, b \}$, then $A \cup \{a_{k+1}\}\in \Gamma$ if and only if $A \cup \{ b \} \in \Gamma$,
\end{itemize}

\noindent where $\Gamma_0$ is the collection of minimal qualified subsets.\\

\noindent Based on our assumption of Case I, we observe that 
\begin{equation}\label{eqn1}
\{a_{l_1}, a_{l_2}, \cdots, a_{l_{k-2}}, a_{k+1}, b \} \not \in \Gamma,
\end{equation}
where $ 1 \leq l_1, l_2, \cdots, l_{k-2}\leq k-1$. \\

\noindent Since $a_k$ is not equivalent to $a_{k+1}$, we can also deduce that
\begin{equation}\label{eqn2}
\{a_{l_1}, a_{l_2}, \cdots, a_{l_{k-2}}, a_{k}, b \}\in\Gamma,
\end{equation}
 where $ 1 \leq l_1, l_2, \cdots, l_{k-2}\leq k-1$.\\
 
 \noindent  Using the property~(\ref{eq:20}), we also obtain that 
 \begin{equation}\label{eqn3}
 \{a_{l_1}, a_{l_2}, \cdots, a_{l_{k-2}}, a_{k},a_{k+1} \}  \in \Gamma, 
 \end{equation}
 where $ 1 \leq l_1, l_2, \cdots, l_{k-2}\leq k-1$.  \\
 
\noindent To prove the property $(1)$, we will first establish the following claims.

\begin{claim}\label{claim53}
$\{a_{l_1},a_{l_2}, \cdots, a_{l_{k-3}}, x, a_{k+1}, b \} \not \in \Gamma$ and $\{a_{l_1},a_{l_2}, \cdots, a_{l_{k-3}}, x, a_{k}, b \}  \in \Gamma$, where $1 \leq l_1, l_2,\cdots, l_{k-3} \leq k-1$ and $x \in \mathcal{P}\backslash \{a_1, a_2,  \cdots, a_{k}, a_{k+1}, b \}$.
\end{claim}
  \begin{proof}[Proof of Claim~\ref{claim53}]
  Assuming the contrary, let us consider $\{a_{l_1},a_{l_2}, \cdots, a_{l_{k-3}}, x, a_{k+1}, b \}  \in \Gamma$, where $1 \leq l_1, l_2, \cdots, l_{k-3} \leq k-1$ and $x \in \mathcal{P}\backslash \{a_1, a_2, \cdots, a_{k}, a_{k+1}, b \}$.\\

Next, let us select a participant from $\{a_{1},a_{2}, \cdots, a_{k-1}\}$ who is not part of $\{a_{l_1},a_{l_2}, \cdots, a_{l_{k-3}}\}$ and denote this participant as $a_{l_{k-2}}$. We will now consider the following subsets of participants in $\mathcal{P}$:  $B_1 = \{a_{l_1}\},\:B_2=\{a_{l_1}, b \}$,  $\:B_3=\{a_{l_1},a_{l_2}, b \},\cdots, B_{k-2}=\{a_{l_1},a_{l_2},\cdots,a_{l_{k-3}}, b \},\:B_{k-1}=\{a_{l_1},a_{l_2}\cdots,a_{l_{k-2}}, b \},$ and $\:B_k=\{a_{l_1},a_{l_2}, \cdots,a_{l_{k-2}}, b, x\}$. 
Note that  $ \emptyset \neq B_1 \subset B_2 \subset \cdots \subset 
\:B_k=\{a_{l_1},a_{l_2}, \cdots,a_{l_{k-2}},  b, x\}
$.  \\

 Now, let us consider
a subset $A=\{a_{l_2},a_{l_3},  \cdots, a_{l_{k-2}}, a_k , a_{k+1}\} \subseteq \mathcal{P}$ consisting of $k-1$ participants. Note that $A \not \in \Gamma$. We can then consider the subsets of $A$ as follows:   $X_1=\{a_{l_2},a_{l_3},   \cdots, a_{l_{k-2}}, a_k, a_{k+1}\}$,\ \ $X_2=\{a_{l_2},a_{l_3},  \cdots, a_{l_{k-2}}, a_{k}\},\:X_3=
\{a_{l_{3}},a_{l_{4}},  \cdots, a_{l_{k-2}}, a_{k}\},\cdots,$ $X_{k-3}= \{a_{l_{k-3}},a_{l_{k-2}}, a_{k}\},$ $ X_{k-2}=\{a_{l_{k-2}},a_{k}\}$, $\:X_{k-1}=\{a_{k}\}$, and $\:X_k=\{a_{k+1}\}$.  \\

\noindent Using~\ref{eqn3},  we observe  that $B_1 \cup X_1 = \{a_{l_1}, a_{l_2}, \cdots, a_{l_{k-2}}, a_{k},a_{k+1} \} \in\Gamma$. Using~\ref{eqn2}, we also derive that $\ B_2 \cup X_2   = \{a_{l_1}, a_{l_2}, \cdots, a_{l_{k-2}}, a_{k}, b \} \in\Gamma,$
 and so on up to
$\ B_{k-1} \cup X_{k-1} = \{a_{l_1},a_{l_2},\cdots, a_{l_{k-2}}, a_{k}, b\} \in\Gamma$. 
 Based on the assumption, we have $ \{a_{l_1},a_{l_2},\cdots,a_{l_{k-3}},  a_{k+1}, b, x\}  \in\Gamma$, which implies that
$\ B_{k} \cup X_{k} = \{a_{l_1},a_{l_2},\cdots,a_{l_{k-2}}, a_{k+1}, b, x \}  \in\Gamma$. 
 Since the set of $k-1$ participants can not be in $\Gamma$, we derive that
$B_0\cup X_1 \notin \Gamma, \  B_1 \cup X_2  \notin \Gamma,\cdots,  B_{k-2} \cup X_{k-1} 
\notin \Gamma$.
Using~\ref{eqn1}, we also derive that $B_{k-1} \cup X_{k} = \{a_{l_1}, a_{l_2}, \cdots, a_{l_{k-2}}, a_{k+1}, b \}  \not \in \Gamma$. \\

If $\:B_k=\{a_{l_1},a_{l_2}, \cdots,a_{l_{k-3}},a_{l_{k-2}}, b, x\} \not \in \Gamma$, then the sequence $\phi \neq B_1 \subseteq B_2 \subseteq \cdots \subseteq B_k \notin \Gamma$ is made independent by the set $A= \{a_{l_2},a_{l_3}, \cdots, a_{l_{k-2}},  a_k, a_{k+1} \}$. Hence, using the independent sequence method, we can conclude that $\rho^*(\Gamma) \leq \frac{|A|}{k}=\frac{k-1}{k}$, which leads to a contradiction. Therefore, we can conclude that 
\begin{align}\label{eq:27}
\{a_{l_1},a_{l_2}, \cdots,a_{l_{k-3}},a_{l_{k-2}}, b, x\}  \in \Gamma,
\end{align}
\noindent where $1 \leq l_1,l_2,l_3,\cdots, l_{k-3},l_{k-2} \leq k-1$.\\

\noindent Let us consider $k+2$ distinct participants: $a_{l_1},a_{l_2}, \cdots,a_{l_{k-2}}, a_k, a_{k+1}, x, b \in \mathcal{P}$, where $1 \leq l_1,l_2,\cdots, l_{k-2} \leq k-1$ and $x \in \mathcal{P}\backslash \{a_1, a_2,  \cdots, a_{k}, a_{k+1}, b \}$. Next, we can apply Lemma~\ref{lemma1} with $u_1=b, u_2=x$, and $v=a_k$.  Then either

 $$ w(\{a_{l_1},a_{l_2}, \cdots,a_{l_{k-2}}, a_{k+1}, b, x \},  \ \Gamma ) = k+1, {\text {\ where\ }}  1 \leq l_1,l_2,\cdots, l_{k-2} \leq k-1,$$ 
 or
 $$ \{ a_{l_1},a_{l_2},  \cdots, a_{l_{k-2}},  a_{k+1}, a_k \} \not \in \Gamma, {\text {\ for \ }} a_k\in \mathcal{P}\backslash \{a_{l_1},a_{l_2},\cdots, a_{l_{k-2}}, a_{k+1}, b \}.$$\\

\noindent Since 
$\{a_{l_1}, a_{l_2}, \cdots, a_{l_{k-2}}, a_{k+1}, b \} \not \in \Gamma$, where $ 1 \leq l_1, l_2, \cdots, l_{k-2}\leq k-1$, we can conclude that $$w(\{a_{l_1},a_{l_2}, \cdots,a_{l_{k-2}}, a_{k+1}, b, x \},  \ \Gamma ) \neq k+1, {\text {\ where\ }}  1 \leq l_1,l_2,l_3,\cdots, l_{k-3},l_{k-2} \leq k-1.$$

\noindent  Let us consider $k+1$ participants:  $a_{l_1},a_{l_2}, \cdots,a_{l_{k-2}}, a_k, a_{k+1},  b \in \mathcal{P}$, where $1 \leq l_1,l_2,\cdots, l_{k-2} \leq k-1$. Since the number of minimal qualified subsets contained in any set of $k+1$ participants is not equal to  $1$, we must have 
$$ \{ a_{l_1},a_{l_2},  \cdots, a_{l_{k-2}},  a_{k+1}, a_k \}  \in \Gamma, {\text {\ for \ }} a_k\in \mathcal{P}\backslash \{a_{l_1},a_{l_2}, \cdots, a_{l_{k-2}}, a_{k+1}, b \},$$

\noindent which leads to a contradiction. Therefore, we can conclude that 

$$\{a_{l_1},a_{l_2}, \cdots, a_{l_{k-3}}, x, a_{k+1}, b \} \not \in \Gamma,$$ where $1 \leq l_1, l_2,\cdots, l_{k-3} \leq k-1$ and $x \in \mathcal{P}\backslash \{a_1, a_2,  \cdots, a_{k}, a_{k+1}, b \}.$ \\

 \noindent Since  $a_k$ is not equivalent to $a_{k+1}$, we also conclude that
 $$
\{a_{l_1},a_{l_2}, \cdots, a_{l_{k-3}}, x, a_{k}, b \}  \in \Gamma,
$$
\noindent where $1 \leq l_1, l_2,\cdots, l_{k-3} \leq k-1$ and $x \in \mathcal{P}\backslash \{a_1, a_2,  \cdots, a_{k}, a_{k+1}, b \}$.

 \noindent This completes the proof of Claim~\ref{claim53}.
  \end{proof}

\vspace{3mm}

\begin{claim}\label{claim54}
 $$\{a_{l_1},a_{l_2}, \cdots, a_{l_{k-4}}, x, a_{k}, a_{k+1}, b \}  \not \in \Gamma,$$  where $1 \leq l_1, l_2,\cdots, l_{k-4} \leq k-1$ and $x \in \mathcal{P}\backslash \{a_1, a_2,  \cdots, a_{k}, a_{k+1}, b \}.$
 \end{claim}
 
 \begin{proof}[Proof of Claim~\ref{claim54}]
 
 Let us assume otherwise, that is, $\{a_{l_1},a_{l_2}, \cdots, a_{l_{k-4}}, x, a_k, a_{k+1}, b \}  \in \Gamma$, where $1 \leq l_1, l_2, \cdots, l_{k-4} \leq k-1$ and $x \in \mathcal{P}\backslash \{a_1, a_2, \cdots, a_{k}, a_{k+1}, b \}$.\\
 
 Next, we can consider two participants from $\{a_{1},a_{2}, \cdots, a_{k-1}\}$ who are not in
 $\{a_{l_1},a_{l_2}, \cdots, a_{l_{k-4}}\}$ and denote them as $a_{l_{k-3}}$ and $a_{l_{k-2}}$ .
 \noindent We will now consider  the following subsets of participants in $\mathcal{P}$: $B_1 = \{ b \},\:B_2=\{a_{l_1}, b \}$,  $\:B_3=\{a_{l_1},a_{l_2}, b \},\cdots$, $B_{k-2}=\{a_{l_1},a_{l_2},\cdots,a_{l_{k-4}}, a_{k}, b \},$ 
  $B_{k-1} =$ $\{a_{l_1},a_{l_2},\cdots,a_{l_{k-4}}, a_k, a_{k+1}, b \},$ and $\:B_k=\{a_{l_1},a_{l_2}, \cdots,a_{l_{k-4}},a_{l_{k-3}}, a_k, a_{k+1}, b\}$. 
Note that  $ \emptyset \neq B_1 \subset B_2 \subset \cdots \subset 
\:B_k=\{a_{l_1},a_{l_2}, \cdots,a_{l_{k-3}}, a_k, a_{k+1}, b \} \not \in \Gamma
$. Next, we consider
a subset $A=\{a_{l_2},a_{l_3},  \cdots, a_{l_{k-4}}, a_{l_{k-2}}, a_k , a_{k+1}, x\} \subseteq \mathcal{P}$ consisting of $k-1$ participants. Note that $A \not \in \Gamma$. We now consider the subsets of $A$ as follows: 
 $X_1=\{a_{l_2},a_{l_3},   \cdots, a_{l_{k-4}},a_{l_{k-2}}, a_k, a_{k+1}, x\}$,\ \ $X_2=\{a_{l_2},a_{l_3},   \cdots, a_{l_{k-4}}, a_{l_{k-2}}, a_{k}, x\},$  \ \  $ X_3=$ $
\{a_{l_{3}},a_{l_{4}},  \cdots, a_{l_{k-4}}, a_{l_{k-2}}, a_{k}, x\},$ $\cdots$,
$ X_{k-2}=\{a_{l_{k-2}},x\}$, $\:X_{k-1}=\{x\}$, and $\:X_k=\{a_{l_{k-2}}\}$.  \\

 Based on  the assumption,   we observe  $B_1 \cup X_1 = \{a_{l_2},a_{l_3},  \cdots,a_{l_{k-4}}, a_{l_{k-2}}, x, a_{k}, a_{k+1}, b\}\in\Gamma$.  Additionally, using  Claim~\ref{claim53}, we  obtain that $$\ B_2 \cup X_2 =
\{a_{l_1},a_{l_2}, \cdots, a_{l_{k-4}}, a_{l_{k-2}}, x, a_{k}, b\}  \in\Gamma,$$

\noindent and so on up to $\ B_{k-2} \cup X_{k-2} =\{a_{l_1},a_{l_2},\cdots, a_{l_{k-4}},a_{l_{k-2}}, x, a_{k}, b\}  \in\Gamma$. 
 Based on the assumption, we have 
$\ B_{k-1} \cup X_{k-1} = 
\{a_{l_1},a_{l_2}, \cdots, a_{l_{k-4}}, x, a_k, a_{k+1}, b \}  \in \Gamma$.
Using~\ref{eqn3}, we obtain that $ \{a_{l_1},a_{l_2}, \cdots,a_{l_{k-2}}, a_k, a_{k+1}\}  \in\Gamma$, which implies 
$\ B_{k} \cup X_{k} = \{a_{l_1},a_{l_2}, \cdots,a_{l_{k-2}}, a_k, a_{k+1}, b \}  \in\Gamma$. \\

 Since the set of $k-1$ participants can not be in $\Gamma$, we derive  that $B_0\cup X_1
 \notin \Gamma,$ $ \  B_1 \cup X_2 
\notin \Gamma$, $\cdots,$ $B_{k-2}\cup X_{k-1}  \not \in \Gamma$.
Based on the assumption, we also deduce that
$$B_{k-1} \cup X_{k} = \{a_{l_1},a_{l_2},\cdots,a_{l_{k-4}}, a_{l_{k-2}}, a_k, a_{k+1}, b \}
  \not \in \Gamma.$$  Then the sequence $\phi \neq B_1 \subseteq B_2 \subseteq \cdots \subseteq B_k \notin \Gamma$ is made independent by the set $A=\{a_{l_2},a_{l_3},  \cdots, a_{l_{k-4}}, a_{l_{k-2}}, a_k , a_{k+1}, x\} \subseteq \mathcal{P}$. Hence, using the independent sequence method, we can conclude that $\rho^*(\Gamma) \leq \frac{|A|}{k}=\frac{k-1}{k}$, which leads to  a contradiction. Therefore, we can conclude that 
\begin{align}\label{eq:29}
\{a_{l_1},a_{l_2}, \cdots, a_{l_{k-4}}, x, a_{k}, a_{k+1}, b \}  \not \in \Gamma
\end{align}
 where $1 \leq l_1, l_2,\cdots, l_{k-4} \leq k-1$ and $x \in \mathcal{P}\backslash \{a_1, a_2,  \cdots, a_{k}, a_{k+1}, b \}$. \\

 \noindent This completes the proof of Claim~\ref{claim54}. \\
 \end{proof}

 \begin{claim}\label{claim55} Suppose that the following three statements hold for $1 \leq t \leq k-2$.
 \begin{enumerate}
 \item[(i)]
 $\{a_{l_1},a_{l_2}, \cdots, a_{l_{k-t-1}}, x_1,x_2,\cdots, x_{t-1}, a_{k+1}, b \}  \not \in \Gamma$, 
 \item[(ii)] $\{a_{l_1},a_{l_2}, \cdots, a_{l_{k-t-1}}, x_1,x_2,\cdots, x_{t-1}, a_{k}, b \}  \in \Gamma$,
 \item[(iii)] $\{a_{l_1},a_{l_2}, \cdots, a_{l_{k-t-2}}, x_1,x_2,\cdots, x_{t-1}, a_{k}, a_{k+1}, b \}  \not \in \Gamma$,
 \end{enumerate}
 where $1 \leq l_1, l_2,  \cdots, l_{k-t-1} \leq k-1$ and $x_1, x_2, \cdots, x_{t-1} \in \mathcal{P}\backslash \{a_1, a_2,  \cdots, a_{k}, a_{k+1}, b \}$.\\

 \noindent Then for $1 \leq t \leq k-2$, we have
 $$\{a_{l_1},a_{l_2}, \cdots, a_{l_{k-t-2}}, x_1,x_2,\cdots, x_{t}, a_{k+1}, b \}  \not \in \Gamma,$$ 
 \noindent and 
 $$\{a_{l_1},a_{l_2}, \cdots, a_{l_{k-t-2}}, x_1,x_2,\cdots, x_{t}, a_{k}, b \}   \in \Gamma,$$  where $1 \leq l_1, l_2,  \cdots, l_{k-t-2} \leq k-1$ and $x_1, x_2, \cdots, x_{t} $ $ \in \mathcal{P}\backslash \{a_1, a_2,  \cdots, a_{k}, a_{k+1}, b \}$. \end{claim}

 \begin{proof}[Proof of Claim~\ref{claim55}]
Let us consider  $k+2$ different participants in $\mathcal{P}$, denoted as
$a_{l_1}, \cdots, a_{l_{k-t-1}},$ $ x_1,\cdots,x_t, a_k, a_{k+1}, b$,  where $1 \leq l_1,  \cdots, l_{k-t-1} \leq k-1$ and $x_1,  \cdots, x_t $ $ \in \mathcal{P}\backslash \{a_1, a_2,  \cdots, a_{k}, a_{k+1}, b \}$. \\

\noindent Using the properties $(i), (ii), (iii)$, now we apply Lemma~\ref{theorem1} with $u_i=a_{l_{k-t-1}}, u_j = a_k, v= x_t$.  Then either

 $$ w(\{a_{l_1}, \cdots, a_{l_{k-t-1}}, x_1,\cdots, x_{t-1}, a_k, a_{k+1}, b \},  \ \Gamma ) = k+1 $$ or
 $$ \{ a_{l_1}, \cdots, a_{l_{k-t-2}}, x_1,\cdots, x_{t-1},x_t,  a_{k+1}, b \} \not \in \Gamma ,$$
 
  \noindent where $1 \leq l_1,\cdots, l_{k-t-1} \leq k-1$ and $x_1,  \cdots, x_t \in \mathcal{P}\backslash \{a_1,   \cdots, a_{k}, a_{k+1}, b \}$. \\

 From $(i)$ and $(iii)$, we observe that  $w(\{a_{l_1}, \cdots, a_{l_{k-t-1}}, x_1,\cdots, x_{t-1}, a_k, a_{k+1}, b \},  \ \Gamma ) \neq k+1$. Therefore, we can conclude that
 
 $$ \{ a_{l_1}, \cdots, a_{l_{k-t-2}}, x_1,\cdots, x_t,  a_{k+1}, b \} \not \in \Gamma ,$$
 
  \noindent where $1 \leq l_1,  \cdots, l_{k-t-1} \leq k-1$ and $x_1, \cdots,  x_t \in \mathcal{P}\backslash \{a_1,  \cdots, a_{k}, a_{k+1}, b \}$. \\
  
   \noindent Since  $a_k$ is not equivalent to $a_{k+1}$, for $1 \leq t \leq k-2$, we also conclude that
$$  \{a_{l_1}, \cdots, a_{l_{k-t-2}}, x_1,\cdots, x_t, a_{k}, b \} \in \Gamma,$$
\noindent where $1 \leq l_1,  \cdots, l_{k-t-2} \leq k-1$ and $x_1,  \cdots, x_{t} \in \mathcal{P}\backslash \{a_1,   \cdots, a_{k}, a_{k+1}, b \}$. \\ This completes the proof of Claim~\ref{claim55}. \\
 \end{proof}

\noindent Using Claim~\ref{claim55}, we can conclude that
   $$\{a_{k+1}, b\} \not \subset A {{\text \ \  \  {for \  any} \  \ }}A \in \Gamma_0,$$

\noindent where $\Gamma_0$ is the collection of minimal qualified subsets.\\

 Now we are prepared to demonstrate the property $(2)$ by using the property $(1)$. To accomplish this, we will begin by
 establishing the following claims.\\

\begin{claim}\label{claim511}

Let $A$ be a subset in $ \mathcal{P}\backslash \{ a_{k+1}, b \}$. If $A \cup \{a_{k+1}\}\in \Gamma$, then  $A \cup \{ b \} \in \Gamma$.
\end{claim}

\begin{proof}[Proof of Claim~\ref{claim511}] Let us consider $k+2$ distinct participants in $ \mathcal{P}$, denoted as $a_{p_1},a_{p_2},\cdots, a_{p_{k-1}}, $ $ a_k, a_{k+1}, b$, where
$ \{ a_{p_1}, \cdots, a_{p_{k-1}}\} \in \mathcal{P}\backslash \{ a_k, a_{k+1}, b \}$.
First, let $A=\{a_{p_1},,\cdots, a_{p_{k-2}}, a_k \}$ be a subset in $ \mathcal{P}\backslash \{ a_{k+1},   b \}$.
Suppose that $A \cup \{a_{k+1}\} = \{a_{p_1},\cdots, a_{p_{k-2}}, a_k, a_{k+1} \} \in \Gamma$. Using the property $(1)$, we clearly deduce that $A \cup \{b\} = \{a_{p_1},\cdots, a_{p_{k-2}}, a_k, b \} \in \Gamma$, where
$ a_{p_1}, \cdots, a_{p_{k-2}}$ is in $ \mathcal{P}\backslash \{ a_k, a_{k+1}, b \}$.
Next, let $A=\{a_{p_1},\cdots, a_{p_{k-1}} \}$ be a subset in $ \mathcal{P}\backslash \{ a_k, a_{k+1},   b \}$. Suppose that $A \cup \{a_{k+1}\} = \{a_{p_1},\cdots, a_{p_{k-1}}, a_{k+1} \} \in \Gamma$, where $ a_{p_1}, \cdots, a_{p_{k-1}}$ is in $ \mathcal{P}\backslash \{ a_k, a_{k+1}, b \}$.  Assuming the contrary, i.e., 
$A \cup \{b\} = \{a_{p_1},\cdots, a_{p_{k-1}}, b \} \not \in \Gamma$, where $ a_{p_1}, \cdots, a_{p_{k-1}}$ are in $ \mathcal{P}\backslash \{a_k,  a_{k+1}, b \}$, we can proceed to prove the desired result. \\

 Let us first define the following subsets of participants  in $\mathcal{P}$: $B_1 = \{b\},\:B_2=\{ a_{p_2}, b \}$,  $\:B_3=\{a_{p_2}, a_{p_3}, b \},\cdots, B_{k-2}=\{a_{p_2},\cdots,a_{p_{k-3}}, a_{p_{k-2}}, b \},\:B_{k-1}=\{a_{p_1},a_{p_2},\cdots,a_{p_{k-2}}, b \},$ and $\:B_k=\{a_{p_1},a_{p_2},\cdots,a_{p_{k-2}}, a_{k+1}, b\}$. 
Based on the property $(1)$, we observe that  $ \emptyset \neq B_1 \subset B_2 \subset \cdots \subset 
\:B_k= \{a_{p_1}, \cdots,a_{p_{k-2}}, a_{k+1},  b \}
\not \in \Gamma$. Next, we  consider
a subset $S=\{a_{p_2},  \cdots, a_{p_{k-1}}, a_{k} \} \subseteq \mathcal{P}$ consisting of $k-1$ participants. Note that $S \not \in \Gamma$. We now define the following subsets of $S$:  $X_1=\{a_{p_2}, a_{p_3}, \cdots, a_{p_{k-1}}, a_{k}
\},$ $ \:X_2=\{a_{p_3}, \cdots, a_{p_{k-1}}, a_{k}\},\:X_3=
\{a_{p_{4}}, \cdots, a_{p_{k-1}}, a_{k}\}$
$,\cdots,X_{k-3}= \{a_{p_{k-2}},a_{p_{k-1}}, a_{k}\},$ $ X_{k-2}=\{a_{p_{k-1}},a_{k}\}$, $\:X_{k-1}=\{a_{k}\}$, and $\:X_k=\{a_{p_{k-1}}\}$.  \\

Using the property $(1)$,   we observe  that $B_1 \cup X_1 = \{a_{p_2},a_{p_3}, \cdots,a_{p_{k-1}}, a_k, b \}\in\Gamma$, and so on up to 
$ B_{k-2} \cup X_{k-2} =  \{a_{p_2},a_{p_3}, \cdots,a_{p_{k-1}}, a_k, b\} \in \Gamma$. Additionally, we obtain that
$ B_{k-1} \cup X_{k-1} =  \{a_{p_1},a_{p_2}, \cdots,a_{p_{k-2}}, a_k, b\} \in \Gamma$.
Based on the assumption, we also have  $ \{a_{p_1},\cdots, a_{p_{k-1}}, a_{k+1} \}  $ $\in \Gamma$, which implies that
$ B_{k}\cup X_{k} = \{a_{p_1}, \cdots,a_{p_{k-1}},  a_{k+1}, b \}  \in\Gamma$. Since the set of $k-1$ participants can not be in $\Gamma$, we derive that $B_0\cup X_1 \not \in \Gamma$, $\cdots$, $B_{k-2}\cup X_{k-1} \not \in \Gamma$. Based on the assumption, we also have $B_{k-1}\cup X_k = \{a_{p_1},a_{p_2},\cdots, a_{p_{k-1}}, b \} \not \in \Gamma$. \\

Therefore, we can apply the independent sequence method to the sequence $\phi \neq B_1 \subseteq B_2 \subseteq \cdots \subseteq B_k \notin \Gamma$ with the set $S= \{a_{p_2},a_{p_3},  \cdots, a_{p_{k-1}}, a_{k} \} $, to obtain  $\rho^*(\Gamma) \leq \frac{|S|}{k}=\frac{k-1}{k}$, which leads to a contradiction. Thus, we can conclude that $A \cup \{b\} = \{a_{p_1},a_{p_2},\cdots, a_{p_{k-1}}, b \}  \in \Gamma$, where $ a_{p_1}, \cdots, a_{p_{k-1}}$ is in $ \mathcal{P}\backslash \{ a_k, a_{k+1}, b \}$. This completes the proof of Claim~\ref{claim511}.
\end{proof}

\begin{claim}\label{claim522}
Let $A$ be a subset in $ \mathcal{P}\backslash \{ a_{k+1}, b \}$. If $A \cup \{b \}\in \Gamma$, then we have $A \cup \{ a_{k+1} \} \in \Gamma$.
\end{claim}

\begin{proof}[Proof of Claim~\ref{claim522}]
 Let us consider $k+2$ distinct participants in $\mathcal{P}$, denoted as $a_{p_1},a_{p_2},\cdots, a_{p_{k-1}},  $ $a_k, a_{k+1}, b$, where
$ a_{p_1}, \cdots, a_{p_{k-1}} \in \mathcal{P}\backslash \{ a_k, a_{k+1}, b \}$. First, let $A=\{a_{p_1},\cdots, a_{p_{k-2}}, a_k \}$ be a subset in $ \mathcal{P}\backslash \{ a_{k+1},   b \}$.
Suppose that $A \cup \{b\} = \{a_{p_1},\cdots, a_{p_{k-2}}, a_k, b \} \in \Gamma$, where
$ a_{p_1}, \cdots, a_{p_{k-2}}$ are in $ \mathcal{P}\backslash \{ a_k, a_{k+1}, b \}$. Assuming the contrary, i.e., 
$A \cup \{a_{k+1}\} = \{a_{p_1},\cdots, a_{p_{k-2}}, a_k, a_{k+1} \} \not \in \Gamma$, where $ a_{p_1}, \cdots, a_{p_{k-2}}$ are in $ \mathcal{P}\backslash \{ a_k, a_{k+1}, b \}$. Using the property $(1)$, we obtain $\{a_{p_1},\cdots, a_{p_{k-2}}, a_{k+1}, b \} $ $\not \in \Gamma$ and $\{a_{s_1},\cdots, a_{s_{k-3}}, a_k, a_{k+1}, b \} $ $\not \in \Gamma$, where $s_1, s_2, \cdots s_{k-3} \in \{p_1, p_2, \cdots, p_{k-2}\}$ and 
$ a_{p_1}, \cdots, a_{p_{k-2}}$ are in $ \mathcal{P}\backslash \{ a_k, a_{k+1}, b \}$. Then, this contradicts the fact that  $1 \not \in \Omega(k+1, \Gamma)$. Therefore, we can conclude that $A \cup \{a_{k+1}\} = \{a_{p_1},\cdots, a_{p_{k-2}}, a_k, a_{k+1} \}  \in \Gamma$, where $ a_{p_1}, \cdots, a_{p_{k-2}}$ are in $ \mathcal{P}\backslash \{ a_k, a_{k+1}, b \}$. \\

 Next, let $A=\{a_{p_1},a_{p_2},\cdots, a_{p_{k-1}} \}$ be a subset in $ \mathcal{P}\backslash \{ a_k, a_{k+1}, b \}$. Suppose that $A \cup \{b\} = \{a_{p_1},\cdots, a_{p_{k-1}}, b \} \in \Gamma$, where $ a_{p_1}, \cdots, a_{p_{k-1}}$ are in $ \mathcal{P}\backslash \{ a_k, a_{k+1}, b \}$.  Assuming the contrary, i.e., 
$A \cup \{a_{k+1}\} = \{a_{p_1},\cdots, a_{p_{k-1}}, a_{k+1} \} \not \in \Gamma$, where $ a_{p_1}, \cdots, a_{p_{k-1}}$ are in $ \mathcal{P}\backslash \{ a_k, a_{k+1}, b \}$. Moreover, using the property $(1)$, we derive that\begin{align}\label{eq:21}
 \{ a_{\beta_1},  \cdots a_{\beta_{k-2}},  a_{k+1}, b \} \not \in \Gamma,
 \end{align}

 \noindent where, $ a_{\beta_1}, \cdots a_{\beta_{k-2}} \in \{ a_{p_1}, a_{p_2}, \cdots a_{p_{k-1}}\}.$\\

 Then, let us consider $k+1$ participants $a_{p_1},a_{p_2}, \cdots, a_{p_{k-1}}, a_{k+1}, b$, then the above results lead to a contradiction, since the number of minimal qualified subsets contained in any set of $k+1$ participants is not equal to  $1$. Thus, we can conclude that $A \cup \{a_{k+1}\} = \{a_{p_1},\cdots, a_{p_{k-1}}, a_{k+1} \} \in \Gamma$, where $ a_{p_1},  \cdots, a_{p_{k-1}}$ are in $ \mathcal{P}\backslash \{ a_k, a_{k+1}, b \}$. This completes the proof of Claim~\ref{claim522}.
\end{proof}

\noindent Using Claim~\ref{claim511} and~\ref{claim522}, we can conclude that \\

if $A \subset \mathcal{P}\backslash \{ a_{k+1}, b \}$, then $A \cup \{a_{k+1}\}\in \Gamma$ if and only if $A \cup \{ b \} \in \Gamma$,\\
 
\noindent where $\Gamma_0$ is the collection of minimal qualified subsets.\\

\noindent Using the property $(1)$ and $(2)$, we  conclude that $b$ is equivalent to  $a_{k+1}$. \\

\noindent {\bf{ Case II: $ \{ a_{\Delta_1}, a_{\Delta_2}, \cdots a_{\Delta_{k-2}}, a_{k}, b \} \not \in \Gamma$}}, where $1 \leq \Delta_1, \Delta_2, \cdots \Delta_{k-2} \leq k-1$.\\

\noindent In this case, we  aim to demonstrate the equivalence of $a_{k}$  and $b$. Given that  $a_{k}$ and $b$ represent distinct participants, it is necessary to establish that:

\begin{itemize}
\item[(1)] $\{a_{k}, b\} \not \subset A$ for any $A \in \Gamma_0$,
\item[(2)] if $A \subset \mathcal{P}\backslash \{ a_{k}, b \}$, then $A \cup \{a_{k}\}\in \Gamma$ if and only if $A \cup \{ b \} \in \Gamma$,
\end{itemize}

\noindent where $\Gamma_0$ is the collection of minimal qualified subsets.\\

\noindent The proof of Case II is omitted since it is similar to the proof of Case I by replacing $a_{k}$ with $a_{k+1}$.\\
 \end{proof}


\section {Proof of Theorem~\ref{main:mainthm}}

\noindent Now, we are ready to establish Theorem~\ref{main:mainthm}. In this section, we we will prove Theorem~\ref{main:mainthm} by utilizing Lemma \ref{lemma2}, \ref{lemma3}, and \ref{lemma4}.

\begin{proof}[Proof of Theorem~\ref{main:mainthm}]
$ $ \newline
(1) $\Rightarrow$ (2) :
Since the vector space access structures are ideal using Theorem~\ref{idealtheorem}, the proof is complete. \\
(2) $\Rightarrow$ (3) : As the optimal information rate of an ideal access structure equals one, the proof is complete. \\
(3) $\Rightarrow$ (4) : Let $\Gamma_\sim$ be a reduced access structure of $\Gamma$ on a set of participants $\mathcal{P}$. Since $k+1 \in \Omega(k+1,\Gamma_\sim)$, there exist $k+1$ distinct participants $a_1, a_2, \cdots , a_k, a_{k+1} \in \mathcal{P}$, such that $\omega(\{a_1,a_2, \cdots, a_k, a_{k+1}\},\Gamma_\sim)=k+1$. Consequently, we can conclude that the induced access structure $\Gamma_\sim(\{a_1,a_2,  \cdots,a_k, a_{k+1}\})$ is a $(k,k+1)$-threshold access structure.\\

Let us consider $\mathcal{P}'\subset \mathcal{P}$, consisting of $m$ participants, where $m\geq k+1$ and $\{a_1,a_2,\dots,a_{k+1}\}\subset\mathcal{P}'$, such that the induced structure $\Gamma_\sim(\mathcal{P}')$ is the $(k,m)$-threshold access structure, while $\Gamma(\mathcal{P}'\cup \{b\})$ is not $(k, m+1)$-threshold access structure for $b\in\mathcal{P}\setminus\mathcal{P}'$. We now assert that $\mathcal{P}=\mathcal{P}'$. To demonstrate this, let us assume the contrary, that is,  $\mathcal{P}\neq\mathcal{P}'$. Then, we claim that there must exist $a_1,a_2,\dots,a_{k+1}\in\mathcal{P}'$ and $b\in\mathcal{P}\setminus\mathcal{P}'$ such that 
\begin{itemize}
    \item[(i)] $\{a_1,a_2,\dots,a_{k-1},b\}\in\Gamma$
    \item[(ii)] $\{a_{t_1},a_{t_2},\dots,a_{t_{k-3}},a_k,a_{k+1},b\}\nin\Gamma$, where $1\leq t_1,t_2,\dots,t_{k-3}\leq k-1$
    
    or $\{a_{l_1},a_{l_2},\dots,a_{l_{k-2}},a_k,b\}\nin\Gamma$, where $1\leq l_1,l_2,\dots,l_{k-2}\leq k-1$
    
    or $\{a_{s_1},a_{s_2},\dots,a_{s_{k-2}},a_{k+1},b\}\nin\Gamma$, where $1\leq s_1,s_2,\dots,s_{k-2}\leq k-1$.\\
\end{itemize}

To begin, let us establish  property $(i)$. Given that $\mathcal{P}\neq\mathcal{P}'$, there exists $A\in {\Gamma_\sim}_0$ such that $A \cap \mathcal{P}' \neq \phi$, and $A\not \subset \mathcal{P'}$. Now, let us consider $k+3$ participants, denoted as $a_1,a_2,\dots,a_{k+1},b_1,b_2 \in \mathcal{P}$, where $a_1,a_2,\dots,a_{k+1}\in \mathcal{P}'$, and $b_1,b_2 \in \mathcal{P}\setminus\mathcal{P}'$. We have two cases to consider. First, if $|A\cap\mathcal{P}'|=k-1$, then  we can conclude that $A=\{a_1,a_2,\dots,a_{k-1},b_i\}\in\Gamma$ for some $a_1,a_2,\dots,a_{k-1}\in\mathcal{P}'$ and $b_i\in\mathcal{P}\setminus\mathcal{P}'$, where $i=1,2$. Second, if $|A\cap\mathcal{P}'|\neq k-1$, then we deduce that $A=\{x_1,x_2,\dots,x_{k-2},b_1,b_2\}\in\Gamma$, where $x_1,x_2,\dots,x_{k-2}\in\mathcal{P}'$ and $b_1,b_2\in \mathcal{P}\setminus\mathcal{P}'$.\\

Let us consider three distinct participants, denoted as $x_{k-1},x_k,x_{k+1}\in \mathcal{P}' \setminus \{x_1,x_2,\cdots,x_{k-2}\}$. Since $x_1,x_2,\cdots,x_{k-1},x_k, x_{k+1}\in\mathcal{P}'$ and $\Gamma_\sim(\mathcal{P}')$ is a $(k,m)$-threshold access structure, we have  $\omega(\{x_1,x_2,\cdots,x_k,x_{k+1}\},\Gamma_\sim)=k+1$, which satisfies the condition of Lemma \ref{lemma2}. Therefore, by utilizing Lemma \ref{lemma2}, there exist $j_1,j_2,\dots,j_{k-1}$ such that either $$\{x_{j_1},x_{j_2},\dots,x_{j_{k-1}},b_1\}\in\Gamma$$ or $$\{x_{j_1},x_{j_2},\dots,x_{j_{k-1}},b_2\}\in\Gamma.$$ Now, property $(i)$ holds for the second case with $x_{j_1}=a_1, x_{j_2}=a_2, \cdots, x_{j_{k-1}}=a_{k-1}$, and $b_1=b$.  \\

Now, we establish property $(ii)$. Since $\Gamma_\sim(\mathcal{P}')$ is a $(k,m)$-threshold access structure and $\Gamma_\sim(\mathcal{P}'\cup\{b\})$ is not a $(k,m+1)$-threshold access structure, where $b \nin \mathcal{P}'$, there must exist two participants, denoted as $y_k,y_{k+1}\in\mathcal{P}'$, such that
at least one of the following conditions holds:

    \begin{align}\label{tt1}\{a_{t_1},a_{t_2},\dots,a_{t_{k-3}},y_k,y_{k+1},b\}\nin\Gamma,\: \text{where}\:\: 1\leq t_1,t_2,\dots,t_{k-3}\leq k-1 \end{align}
    \begin{align}\label{tt2}\text{or}\:\:\:\{a_{l_1},a_{l_2},\dots,a_{l_{k-2}},y_{k},b\}\nin\Gamma,\: \text{where}\:\: 1\leq l_1,l_2,\dots,l_{k-2}\leq k-1\end{align}
    \begin{align}\label{tt3}\text{or}\:\:\:\{a_{s_1},a_{s_2},\dots,a_{s_{k-2}},y_{k+1},b\}\nin\Gamma,\: \text{where}\:\: 1\leq s_1,s_2,\dots,s_{k-2}\leq k-1,
    \end{align}
    where $b\in\mathcal{P}\setminus\mathcal{P}'$. \\

    \noindent  From property $(i)$, we can conclude that there exist $a_1,a_2,\dots,a_{k+1}\in\mathcal{P}'$ and $b\in\mathcal{P}\setminus\mathcal{P}'$ such that 
 $\{a_1,a_2,\dots,a_{k-1},b\}\in\Gamma$. We must now consider two cases. \\
 
First, if $\{a_1,a_2,\dots,a_{k-1}\} \cap \{y_k,y_{k+1}\}=\phi$, then property $(ii)$ holds  with $y_k=a_k$ and $y_{k+1}=a_{k+1}$.
    Second, without loss of generality, we assume that $a_1,a_2,\dots,a_{k-1},y_k$ are distinct, and $y_{k+1}=a_1$. Since $|\mathcal{P}'|=m'\geq k+1$, there exists $z_{k+1}$ such that $z_{k+1}\in \mathcal{P}'\setminus \{a_1,a_2,\dots, a_{k-1},y_k\}$. Consequently,  we can deduce that $\Gamma_\sim(\{a_1,a_2,\dots,a_{k-1},y_k,z_{k+1},b\})$ is not a $(k,k+2)$-threshold access structure by utilizing Equations ($~\ref{tt1}),(\ref{tt2}),(\ref{tt3}).$
  Now, we assert that
    $$\{a_{t_1},a_{t_2},\dots,a_{t_{k-3}},y_k,z_{k+1},b\}\nin\Gamma,\: \text{where}\:\: 1\leq t_1,t_2,\dots,t_{k-3}\leq k-1$$
    $$\text{or}\:\:\:\{a_{l_1},a_{l_2},\dots,a_{l_{k-2}},y_k,b\}\nin\Gamma,\: \text{where}\:\: 1\leq l_1,l_2,\dots,l_{k-2}\leq k-1$$
    $$\text{or}\:\:\:\{a_{s_1},a_{s_2},\dots,a_{s_{k-2}},z_{k+1},b\}\nin\Gamma,\: \text{where}\:\: 1\leq s_1,s_2,\dots,s_{k-2}\leq k-1.$$\\
    
\noindent To accomplish this, let us assume otherwise, that is,  suppose that
    $$\{a_{t_1},a_{t_2},\dots,a_{t_{k-3}},y_k,z_{k+1},b\}\in\Gamma,\: \text{where}\:\: 1\leq t_1,t_2,\dots,t_{k-3}\leq k-1$$
    $$\text{and}\:\:\:\{a_{l_1},a_{l_2},\dots,a_{l_{k-2}},y_k,b\}\in\Gamma,\: \text{where}\:\: 1\leq l_1,l_2,\dots,l_{k-2}\leq k-1$$
    $$\text{and}\:\:\:\{a_{s_1},a_{s_2},\dots,a_{s_{k-2}},z_{k+1},b\}\in\Gamma,\: \text{where}\:\: 1\leq s_1,s_2,\dots,s_{k-2}\leq k-1.$$ \\

\noindent Using property $(i)$, we can find  $a_1,a_2,\dots,a_{k-1}$ such that $\{a_1,a_2,\dots,a_{k-1},b\}\in\Gamma$, as required by Lemma \ref{lemma3}. Consequently, $\Gamma_\sim(\{a_1,a_2,\dots,a_{k-1},y_k,z_{k+1},b\})$ is a $(k,k+2)$-threshold access structure, which creates a contradiction. Therefore, we can conclude that property $(ii)$ holds with $y_k=a_k$ and $z_{k+1}=a_{k+1} 
$. Since $a_1,a_2, \cdots, a_{k+1}\in \mathcal{P}'$ and
$\Gamma_\sim(\mathcal{P}')$ is $(k,m)$-threshold access structure, we have $\omega(\{a_1,a_2,\dots,a_{k+1}\},\Gamma_\sim)=k+1$.  Utilizing Lemma \ref{lemma4},  we can conclude that either $b$ is equivalent to $a_k$ or $b$ is equivalent to $a_{k+1}$. This contradicts  the fact that $\Gamma_\sim$ is a reduced access structure. Therefore, we can deduce that $\mathcal{P}=\mathcal{P}'$. Consequently, $\Gamma_\sim(\mathcal{P})$ is also $(t,m)$-threshold access structure. \\

\noindent (4) $\Rightarrow$ (1) : The results from Section $3$ establish that (4) implies (1). According to the definitions, a $(k,n)$-threshold access structure is an example of a vector space access structure. Therefore, if the reduced access structure of $\Gamma$ is a $(k,n)$-threshold access structure, then it is also  a vector space access structure. Furthermore, the reduced access structure of $\Gamma$ is a vector space access structure if and only if $\Gamma$ is a vector space access structure. \\

\noindent This completes the proof of Theorem~\ref{main:mainthm}.
\end{proof}
 




\end{document}